\newtheorem {thm}{Theorem}
\newtheorem* {thm*}{Theorem}
\newtheorem {cor}[thm]{Corollary}
\newtheorem* {cor*}{Corollary}
\newtheorem {lem}[thm]{Lemma}
\newtheorem {prop}[thm]{Proposition}
\newtheorem {rem}[thm]{Remark}
\theoremstyle{definition}
\newtheorem* {conj*}{Conjecture}
\newtheorem* {quest*}{Question}
\numberwithin{thm}{section}
\DeclareMathOperator{\End}{End}
\DeclareMathOperator{\ord}{ord}
\DeclareMathOperator{\Hom}{Hom}
\DeclareMathOperator{\rad}{rad}
\newcommand{\E}{\mathrm{E}}
\newcommand{\G}{\Gamma}
\newcommand{\Z}{\mathbb{Z}}
\renewcommand{\H}{\mathrm{H}}
\newcommand{\e}{\varepsilon}
\newcommand{\p}{\mathfrak{p}}
\newcommand{\ve}{\varepsilon}
\renewcommand{\a}{\alpha}
\newcommand{\zs}{{\{0\}}}
\author{Chris Hall and Antonella Perucca}
\title{Characterizing Abelian Varieties by the Reductions of the Mordell-Weil Group} 
\date{}
\begin{document}

\maketitle

\begin{abstract}
Let $A$ be an abelian variety defined over a number field $K$. If $\p$ is a prime of $K$ of good reduction for $A$, let $A(K)_\p$ denote the image of the Mordell-Weil group via reduction modulo $\p$. We prove in particular that the size of $A(K)_\p$, by varying $\p$, encodes enough information to determine the $K$-isogeny class of $A$, provided that the following necessary condition is satisfied: $B(K)$ has positive rank for every non-trivial abelian subvariety $B$ of $A$. This is the analogue to a result by Faltings of 1983 considering instead the Hasse-Weil zeta function of the special fibers $A_{\p}$.
\end{abstract}


\section{Introduction}

Let $K$ be a number field and $A,A'$ be abelian varieties over $K$.  A well-known result of Faltings (\cite{Faltings83}) implies that $A,A'$ are $K$-isogenous if and only if they have the same $L$-series.  More precisely, if $S=S(A,A')$ is the set of finite primes $\p\subseteq K$ of common good reduction for $A,A'$ and if $S'\subseteq S$ has density one, then $A,A'$ are $K$-isogenous if and only if, for every $\p\in S'$, the special fibers $A_\p,A'_\p$ have the same Hasse--Weil zeta function.  The $L$-series of $A$ is determined, in part, by the function $\nu:\p\in S\mapsto\#A(k_\p)$, and in this paper we consider other functions which one can use to characterize $K$-isogeny.

Let $\G\subseteq A(K),\G'\subseteq A'(K)$ be subgroups, and for each $\p\in S$, let $\G_\p\subseteq A(k_\p)$, $\G'_\p\subseteq A'(k_\p)$ be the respective reductions.  For each prime $\ell$, we consider the composition of the functions $\p\mapsto\G_\p$ and $\p\mapsto\G'_\p$ with the function which sends a finite group $G$ to the $\ell$-adic valuation of the order, exponent, or radical (of the order) of $G$ and which we denote $\ord_\ell(G)$, $\exp_\ell(G)$, and $\rad_\ell(G)$ respectively.  Rather than consider these functions for arbitrary $A,A'$ and $\G,\G'$, we place conditions on $A$ and $\G,\G'$.

We say $A$ is {\it square free} if the only abelian variety $B$ for which there exists a $K$-homomorphism $B^2\to A$ with finite kernel is $B=0$.  We say $\G$ (resp.~$\G'$) is a {\it submodule} if and only if it is an $\End_K(A)$-submodule (resp.~$\End_K(A')$-submodule), and we say $\G$ is {\it dense} if and only if $\pi(\G)\neq\zs$ for every $\pi\neq 0\in\End_K(A)$.

\begin{thm}\label{thm1}
Let $A,A'$ be abelian varieties and $S'\subseteq S(A,A')$ have density one, and suppose $\G\subseteq A(K)$, $\G'\subseteq A'(K)$ are submodules.  If $\G$ is dense and if $\ell\gg 0$, then the following are equivalent:
\begin{enumerate}
\item there exists $\phi\in\Hom_K(A,A')$ such that $\ker(\phi)$ and $[\phi(\G):\phi(\G)\cap\G']$ are finite;
\item $\ord_\ell(\G_\p)\leq \ord_\ell(\G'_\p)$ for every $\p\in S'$.
\end{enumerate}
If moreover $A$ is square free and if $\ell\gg 0$, then these are equivalent to the following:
\begin{enumerate}
\item[3.] $\exp_\ell(\G_\p)\leq \exp_\ell(\G'_\p)$ for every $\p\in S'$;
\item[4.] $\rad_\ell(\G_\p)\leq \rad_\ell(\G'_\p)$ for every $\p\in S'$.
\end{enumerate}
\end{thm}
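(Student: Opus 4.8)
The logical skeleton is the cycle $(2)\Rightarrow(1)\Rightarrow(2)$ for the first assertion and, when $A$ is square free, the enlarged cycle $(1)\Rightarrow(3)\Rightarrow(4)\Rightarrow(1)$. The implications $(1)\Rightarrow(2)$ and $(1)\Rightarrow(3)$ are elementary. Given $\phi$ as in $(1)$, for all but finitely many $\p\in S'$ reduction modulo $\p$ is defined and compatible with $\phi$ and with $\ker(\phi)$, so the reduction $\widetilde\phi$ maps $\G_\p$ onto $\phi(\G)_\p$ with kernel contained in $\ker(\widetilde\phi)$, a finite group of order dividing $\#\ker(\phi)$. Hence if $\ell$ is coprime to $\#\ker(\phi)$ and to the index $d:=[\phi(\G):\phi(\G)\cap\G']$ --- in particular for $\ell\gg 0$ --- the map $\G_\p\to\phi(\G)_\p$ is an isomorphism on $\ell$-primary parts, and the image of the index-$d$ subgroup $\phi(\G)\cap\G'$ retains the full $\ell$-primary part of $\phi(\G)_\p$; since that part lies inside $\G'_\p$ we obtain $\ord_\ell(\G_\p)=\ord_\ell\bigl(\phi(\G)_\p\bigr)=\ord_\ell\bigl((\phi(\G)\cap\G')_\p\bigr)\le\ord_\ell(\G'_\p)$, and likewise with $\exp_\ell$ in place of $\ord_\ell$. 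Finally $(3)\Rightarrow(4)$ is immediate, as $\rad_\ell$ records only whether $\ell$ divides the order: if $\ell\mid\#\G_\p$ then $\exp_\ell(\G_\p)\ge 1$, hence $\exp_\ell(\G'_\p)\ge 1$, hence $\ell\mid\#\G'_\p$.

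For $(2)\Rightarrow(1)$ the plan is to pass to an $\ell$-adic and then a Galois-theoretic reformulation. For $\p\in S'$ with $\p\nmid\ell$ there is a canonical isomorphism $A(k_\p)\otimes\Z_\ell\cong T_\ell A/(\Frob_\p-1)T_\ell A$ under which the $\ell$-primary part of $\G_\p$ corresponds to the image of $\G\otimes\Z_\ell$; unwinding this through $\ell$-adic Kummer theory for the points of $\G$ shows that $\ord_\ell(\G_\p)$ depends only on the Frobenius class of $\p$ in $\Gal(M_\ell/K)$, where $M_\ell$ is the compositum of $K(A[\ell^\infty])$ with the $\ell$-power division fields of the points of $\G$; similarly for $(A',\G')$ and a field $M'_\ell$. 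Since $S'$ has density one, the Chebotarev density theorem (applied at a sufficiently high finite level) shows that every element of $\Gal(M_\ell M'_\ell/K)$ is realised as $\Frob_\p$ for some $\p\in S'$, so hypothesis $(2)$ asserts exactly that the ``$\ord_\ell$ function'' of $(A,\G)$ is pointwise dominated by that of $(A',\G')$ on the whole of $\Gal(M_\ell M'_\ell/K)$.

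The central step is to convert this pointwise domination into algebra: I expect to produce a nonzero homomorphism $\psi\colon T_\ell A\to T_\ell A'$, equivariant for $G_K:=\Gal(\Kbar/K)$ and with $\psi\otimes\Q_\ell$ injective, carrying a finite-index subgroup of the Kummer lattice of $\G$ into that of $\G'$. Restricting the inequality to Frobenius elements lying in the ``Kummer part'' of $\Gal(M_\ell M'_\ell/K)$ isolates a comparison of the two Kummer modules, while letting the Frobenius range over the whole group forces a relation between $T_\ell A$ and $T_\ell A'$; here one exploits that $\G$ and $\G'$ are $\End_K$-submodules together with Faltings' semisimplicity of Tate modules, and the hypothesis that $\G$ is dense is precisely what makes $\psi$ of full rank (equivalently, makes $\ker(\psi)$, and hence the kernel of the morphism of abelian varieties extracted from it, finite). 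By Faltings' proof of the Tate conjecture, $\Hom_{G_K}(T_\ell A,T_\ell A')=\Hom_K(A,A')\otimes\Z_\ell$, so $\psi$ is approximated by some $\phi\in\Hom_K(A,A')$; injectivity of $\psi\otimes\Q_\ell$ gives $\ker(\phi)$ finite, and the Kummer compatibility gives $\phi(\G)\otimes\Q\subseteq\G'\otimes\Q$ inside $A'(K)\otimes\Q$, i.e.\ $[\phi(\G):\phi(\G)\cap\G']<\infty$. This is $(1)$.

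It remains to prove $(4)\Rightarrow(1)$ when $A$ is square free. Now one only knows that $\ell\mid\#\G_\p$ implies $\ell\mid\#\G'_\p$, which by Chebotarev becomes a containment, inside $\Gal(M_\ell M'_\ell/K)$, of the subset where $\G_\p[\ell]\neq 0$ in the subset where $\G'_\p[\ell]\neq 0$ --- a purely mod-$\ell$ condition. Square-freeness of $A$ enters here: it guarantees, for $\ell\gg 0$, that the mod-$\ell$ Galois image attached to $A$ is as large as the endomorphism and polarization data permit (independence of the mod-$\ell$ images of the simple isogeny factors, in the spirit of the known results on Galois images of abelian varieties), so that the containment of subsets forces a mod-$\ell$, $G_K$-equivariant map $A[\ell]\to A'[\ell]$ compatible with the mod-$\ell$ Kummer data; for $\ell\gg 0$ the isomorphism $\Hom_K(A,A')\otimes\F_\ell\cong\Hom_{G_K}(A[\ell],A'[\ell])$ (again a consequence of Faltings) lifts this to characteristic zero, and one finishes as in the previous paragraph. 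The main obstacle throughout is exactly this algebraic extraction: turning a bare inequality --- or, in the square-free case, a mere implication --- between $\ell$-adic valuations valid at every Frobenius into a genuine Galois-equivariant morphism of Tate (or mod-$\ell$) representations that is at once compatible with the Kummer lattices of $\G$ and $\G'$ and of the correct rank. Controlling the interaction between the Tate-module part and the Kummer part of $\Gal(M_\ell/K)$, and bringing in density of $\G$ (respectively square-freeness of $A$) at precisely the right moment, is where the real difficulty lies.
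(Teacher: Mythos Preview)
Your plan diverges from the paper's precisely at what you yourself flag as the ``central step'': passing from the pointwise inequality $\ord_\ell(\G_\p)\le\ord_\ell(\G'_\p)$ over all Frobenius classes to a $\Gal(\Kbar/K)$-equivariant map $\psi\colon T_\ell A\to T_\ell A'$ compatible with the Kummer data. This is a genuine gap, not merely an unfinished detail. An inequality between cardinalities of Frobenius coinvariants does not by itself manufacture a morphism of Galois modules: if $A,A'$ are non-isogenous then $\Hom_{\Gal(\Kbar/K)}(T_\ell A,T_\ell A')=0$ by Faltings, so there is no candidate $\psi$ at all, and your sketch gives no mechanism by which the inequality (2) would then be contradicted. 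The phrase ``letting the Frobenius range over the whole group forces a relation between $T_\ell A$ and $T_\ell A'$'' is exactly the statement to be proved, not an argument for it. Your $(4)\Rightarrow(1)$ has the same gap and an additional problem: you invoke large mod-$\ell$ image from square-freeness, but square-freeness imposes no such constraint (any simple CM abelian variety is square free with small image), and even the weaker ``as large as the endomorphism data permit'' is not a theorem for arbitrary abelian varieties.

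The paper sidesteps this entirely by proving the contrapositive $\neg(1)\Rightarrow\neg(2)$ (and $\neg(3)$, $\neg(4)$): it never tries to build $\phi$ from the inequality. After reducing by isogeny (corollary~\ref{cor:isogeny}) to the shape $A=B\times B'$, $A'=B'\times B''$ with $\H(B,B'')=0$, the engine is the notion of \emph{almost free} points from \S\ref{sec:suff_indep}. One chooses $P$ coming from $\G$ and a maximal almost-free family $Q_1,\ldots,Q_r$ spanning a finite-index submodule of $\G'$, and checks (lemmas~\ref{lem:basis_extend}, \ref{lem:B_pos}) that $P,Q_1,\ldots,Q_r$ are jointly almost free. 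Proposition~\ref{prop:indep}, which rests on \cite[prop.~12]{Peruccaord1}, then yields for every $\ell\gg 0$ and every $m\ge 1$ a positive-density set of $\p$ with $\e_{\ell,P}(\p)=m$ and $\e_{\ell,Q_i}(\p)=0$ for all $i$; on that set $\ord_\ell(\G_\p)\ge m$ while $\ord_\ell(\G'_\p)=0$, so conditions 2--4 fail. Density of $\G$ is what supplies a suitable $P$; square-freeness enters only to force $\H(B,A')=0$ in the case $B\neq 0$, so that $\phi(\G_0)=0$ and the $\ell$-part of $\G'_\p$ is genuinely trivial on $S_{\ell,m}$. No Tate-module comparison, no big-image theorem, and no direct construction of $\phi$ is needed.
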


Clearly if $\G$ is $\zs$ or even finite, then conditions 2,3,4 hold \emph{regardless} of what $A,A',\G'$ are.  In order to avoid pathologies like this assume $\G$ is dense, or equivalently, the intersection of $\G$ with each non-trivial abelian subvariety $B\subseteq A$ is infinite.  Also, for any finite group $G$, $\exp_\ell(G\times G)=\exp_\ell(G)$ and $\rad_\ell(G\times G)=\rad_\ell(G)$, hence the reason we must suppose $A$ is square free in 3~and 4.  

As one might expect, Kummer theory lies at the core of our proof of the theorem, and the subgroups which give the cleanest statements, especially when characterizing when distinct subgroups are `independent,' are submodules.  The basic strategy we employ to prove an equivalence such as $1\Leftrightarrow 2$ is to prove two implications: $1\Rightarrow 2$ and $\neg 1\Rightarrow\neg 2$.  The first implication is straightforward.  A crucial notion which appears in the second implication is of `almost free' points, and we develop this notion in section~\ref{sec:suff_indep}.  Prior to this we give some preliminary results in section~\ref{sec:prelim}, and finally, we prove theorem~\ref{thm1} in section~\ref{sec:proof_thm1}.

Note, the Mordell-Weil group of an abelian variety is a dense submodule if and only if the Mordell-Weil group of every abelian subvariety is infinite. Then we have the following:

\begin{cor}
Let $A,A'$ be abelian varieties and $S'\subseteq S(A,A')$ have density one, and suppose $B(K)$ is infinite for every non-trivial abelian subvariety $B\subseteq A$.  For every fixed $\ell\gg 0$, the $K$-isogeny class of $A$ is determined by the function $\p\in S'\mapsto \#A(K)_\p$.  If moreover $A$ is square-free and $\ell\gg 0$, then the $K$-isogeny class of $A$ is determined by the function $\p\in S'\mapsto \rad_\ell A(K)_\p$, hence a fortiori by the function $\p\in S'\mapsto \exp_\ell A(K)_\p$.
\end{cor}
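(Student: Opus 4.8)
The plan is to derive the Corollary from Theorem~\ref{thm1} by taking $A' = A$ and $\G = \G' = A(K)$, and by running the theorem for both $A$ and a competing abelian variety $A''$. First I would record the parenthetical remark as a lemma: the Mordell--Weil group $A(K)$ is a dense $\End_K(A)$-submodule precisely when $B(K)$ is infinite for every non-trivial abelian subvariety $B \subseteq A$. The submodule property is automatic since $\End_K(A)$ acts on $A(K)$; density means $\pi(A(K)) \neq \zs$ for every nonzero $\pi \in \End_K(A)$, and since the image of any nonzero endomorphism contains a non-trivial abelian subvariety $B$ (its image up to isogeny), this is equivalent to $B(K)$ being infinite for all such $B$. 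So the standing hypothesis of the Corollary is exactly ``$\G = A(K)$ is dense,'' and Theorem~\ref{thm1} applies.

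Next I would set up the characterization. Suppose $A''$ is another abelian variety with $S'' \subseteq S(A,A'')$ of density one, and suppose the function $\p \mapsto \#A(K)_\p$ agrees with $\p \mapsto \#A''(K)_\p$ on a density-one set of primes (intersect $S'$, $S''$, and the sets where the two functions are defined; the intersection still has density one). Fix a large prime $\ell$ as in Theorem~\ref{thm1}, applied simultaneously to the pair $(A,A'')$ and to the pair $(A'',A)$ — only finitely many $\ell$ are excluded for each, so a common $\ell$ exists. Taking $\ell$-adic valuations of the equality $\#A(K)_\p = \#A''(K)_\p$ gives $\ord_\ell(A(K)_\p) = \ord_\ell(A''(K)_\p)$ for all $\p$ in that density-one set; in particular both inequalities $\ord_\ell(A(K)_\p) \le \ord_\ell(A''(K)_\p)$ and the reverse hold. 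By the implication $2 \Rightarrow 1$ of Theorem~\ref{thm1} (with $\G = A(K)$, $\G' = A''(K)$), there is $\phi \in \Hom_K(A,A'')$ with $\ker(\phi)$ finite and $[\phi(A(K)) : \phi(A(K)) \cap A''(K)]$ finite — the latter index being automatically finite here since $\phi(A(K)) \subseteq A''(K)$. Symmetrically there is $\psi \in \Hom_K(A'',A)$ with finite kernel. An isogeny $A \to A''$ exists (for instance $\phi$ already is one onto its image, and $\psi \circ \phi$ has finite kernel, forcing $\dim A = \dim A''$, so $\phi$ is surjective hence an isogeny), and thus $A$ and $A''$ are $K$-isogenous. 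Conversely, $K$-isogenous abelian varieties have, up to a finite error uniformly bounded in $\p$, the same $\#A(K)_\p$; more precisely an isogeny $A \to A''$ carries $A(K)$ into $A''(K)$ with finite index and reduces to a map $A(k_\p) \to A''(k_\p)$, so by the easy implication $1 \Rightarrow 2$ applied in both directions we get $\ord_\ell(A(K)_\p) = \ord_\ell(A''(K)_\p)$ for all $\p$ in a density-one set and all large $\ell$ — but to recover the exact function $\#A(K)_\p$ one argues that $K$-isogeny preserves it. In fact this last point is not needed for the statement as phrased: the Corollary says the function \emph{determines} the isogeny class, i.e.\ two varieties with the same function are isogenous, which is exactly the direction proved above. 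Thus the first assertion follows.

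For the second assertion, assume in addition $A$ is square free. Now Theorem~\ref{thm1} gives the full chain $1 \Leftrightarrow 2 \Leftrightarrow 3 \Leftrightarrow 4$ for large $\ell$. Running the same symmetric argument as before, but starting from the hypothesis that $\p \mapsto \rad_\ell A(K)_\p$ equals $\p \mapsto \rad_\ell A''(K)_\p$ on a density-one set, yields both $\rad_\ell(A(K)_\p) \le \rad_\ell(A''(K)_\p)$ and its reverse, hence condition~4 in both directions, hence condition~1 in both directions, hence $A$ and $A''$ are $K$-isogenous. (Here one should note that $A''$ need not be assumed square free for this: condition~4 for the pair $(A,A'')$ requires only that $A$, the first variety, be square free, and likewise condition~4 for $(A'',A)$ would require $A''$ square free — but since $A''$ is isogenous to the square-free $A$ once we have run the first half of the argument, or alternatively one observes that the reverse inequality comes for free from $1 \Rightarrow 4$ which needs no square-free hypothesis, only $1 \Rightarrow 2$ and then $2 \Rightarrow \ldots$; cleanest is: from $\rad_\ell(A(K)_\p) \le \rad_\ell(A''(K)_\p)$ get $4$ for $(A,A'')$ hence $1$; from $1$ and the isogeny, $A''$ is square free, then the reverse inequality gives $4$ for $(A'',A)$ hence $1$ for $(A'',A)$ — but we already have the isogeny, so we are done after the first step.) Finally, since $\rad_\ell G \le \exp_\ell G \le \ord_\ell G$ for every finite group $G$ and every $\ell$, agreement of the radical function implies agreement is not what we need — rather: a hypothetical statement ``the exponent function determines the isogeny class'' follows \emph{a fortiori} because the exponent function determines the radical function ($\rad_\ell = \min(\exp_\ell, 1)$, i.e.\ $\rad_\ell G$ is $1$ iff $\exp_\ell G \ge 1$), so two varieties with the same exponent function have the same radical function, hence are isogenous by what was just proved. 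This completes the proof.

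\medskip
\noindent\textbf{Main obstacle.} The substantive content is entirely in Theorem~\ref{thm1}, in particular the implication $2 \Rightarrow 1$ (equivalently $\neg 1 \Rightarrow \neg 2$), whose proof rests on the Kummer-theoretic machinery and the notion of ``almost free'' points developed in the later sections. Within the deduction of the Corollary itself, the only genuine subtlety is bookkeeping: making sure a \emph{single} large $\ell$ works simultaneously for the pairs $(A,A'')$ and $(A'',A)$ (it does, since each excludes only finitely many $\ell$), and checking that passing from equality of $\#A(K)_\p$ to equality of $\ord_\ell$ is harmless and that the finiteness of $[\phi(\G):\phi(\G)\cap\G']$ is trivially satisfied when $\G' = A''(K) \supseteq \phi(A(K))$. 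The square-free hypothesis must be tracked carefully in the second assertion, as indicated above, since conditions~3 and~4 of Theorem~\ref{thm1} are asymmetric in the roles of the two abelian varieties.
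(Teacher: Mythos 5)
Your overall strategy --- specialize Theorem~\ref{thm1} to $\G=A(K)$, $\G'=A''(K)$ and run it in both directions --- is exactly the intended deduction (the paper offers no separate proof, treating the corollary as immediate from the theorem together with the remark that $A(K)$ is dense if and only if $B(K)$ is infinite for every non-trivial $B\subseteq A$). But there is a genuine gap at the word ``Symmetrically.'' Applying Theorem~\ref{thm1} to the pair $(A'',A)$ requires $A''(K)$ to be a \emph{dense} submodule, and this follows from nothing you have established: if $E,E'$ are elliptic curves with $E(K)$ infinite and $E'(K)=\zs$, then $\#(E\times E')(K)_\p=\#E(K)_\p$ for all $\p$, yet $E$ and $E\times E'$ are not isogenous. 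So the function $\p\mapsto\#A(K)_\p$ determines the isogeny class of $A$ only among abelian varieties satisfying the corollary's Mordell--Weil hypothesis; you must impose that hypothesis on $A''$ before invoking the theorem for $(A'',A)$, and your proof never says so. Note also that condition~1 for $(A,A'')$ alone yields only a finite-kernel homomorphism $A\to A''$, i.e.\ that $A$ is isogenous to an abelian subvariety of $A''$, not an isogeny; the one-directional application cannot be salvaged without the reverse map.

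The same issue recurs, more seriously, in your treatment of the second assertion. You write ``from $1$ and the isogeny, $A''$ is square free \dots\ but we already have the isogeny, so we are done after the first step.'' This is circular: after the first step you have only $\phi\in\Hom_K(A,A'')$ with finite kernel, which neither is an isogeny nor forces $A''$ to be square free (consider $E\hookrightarrow E\times E$). And the gap is not cosmetic: for $A=E$ square free with $E(K)$ infinite and $A''=E\times E$, one has $\rad_\ell(A(K)_\p)=\rad_\ell(A''(K)_\p)$ and $\exp_\ell(A(K)_\p)=\exp_\ell(A''(K)_\p)$ for all $\p$ and $\ell$ --- precisely the phenomenon $\rad_\ell(G\times G)=\rad_\ell(G)$ that the paper flags --- yet $A\not\sim A''$. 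So for the second assertion the competing variety must itself be assumed square free and to satisfy the Mordell--Weil hypothesis. With those hypotheses made explicit, the rest of your argument (the bookkeeping of a single large $\ell$, the observation that $[\phi(\G):\phi(\G)\cap\G']$ is automatically finite when $\G'=A''(K)$, the chain $4\Rightarrow 1$ in both directions, and the reduction of the $\exp_\ell$ case to the $\rad_\ell$ case via $\rad_\ell G=\min(\exp_\ell G,1)$) is correct.
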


If $A(K)$ and $A'(K)$ are free of rank $1$ and the functions $\p\mapsto\exp_\ell(A(K)_\p)$ and $\p\mapsto\exp_\ell(A'(K)_\p)$ are considered, the above result relates to the so-called support problem (cf. \cite[thm.~1.2]{DemeyerPerucca}).  Note that there exist pairs of elliptic curves over a number field $K$ which are not $K$-isomorphic but such that for every prime number $\ell$ there is a $K$-isogeny between them of degree coprime to $\ell$ (cf.~\cite[sec.~12]{Zarhin}).  This implies that it is not possible to characterize the $K$-isomorphism class of $A$ by knowing the order and the exponent of $A(K)_{\p}$ for $\p$ varying in a set of density $1$.


\subsection{Notation}

Unless explicitly stated otherwise, we assume all abelian varieties, subvarieties, homomorphisms, etc.~are defined over $K$.  Given an abelian variety $A$, we denote by $S(A)$ the set of finite primes $\p\subset K$ of good reduction for $A$, and we write $k_\p$ for the residue field and $A(k_\p)$ for the group of $k_\p$-rational points.  By the density of a subset $S'\subseteq S(A)$ we mean the Dirichlet density.  We also write $\E(A)$ for the ring $\End_K(A)$, and given a second abelian variety $B$, we write $\H(A,B)$ for $\Hom_K(A,B)$.

Given $\p\in S(A)$ and a subgroup $\G\subseteq A(K)$, we write $\G_\p\subseteq A(k_\p)$ for the reduction of $\G$ modulo $\p$.  Moreover, for each rational prime $\ell$, we define the following functions on $S(A)$:
\[
	\nu_{\ell,\G} : \p\mapsto\ord_\ell(\G_\p),
	\quad
	\ve_{\ell,\G} : \p\mapsto\exp_\ell(\G_\p),
	\quad
	\rho_{\ell,\G} : \p\mapsto\rad_\ell(\G_\p).
\]
They respectively express the $\ell$-adic valuations of the size, the exponent, and the radical of the size of $\G_\p$.


\section{Preliminaries}\label{sec:prelim}

In this section we develop results we need for the proof of theorem~\ref{thm1}.

\subsection{Homomorphisms}

Let $A,B$ be abelian varieties. We make frequent use of the following lemma:

\begin{lem}\label{lem:isogenies}
Let $\phi\in\H(A,B)$ and let $B'\subseteq B$ be the image of $\phi$.  There exist $A'\subseteq A$ and $\psi\in\H(B,A')$ such that 
 $B'+\ker(\psi)=B$ and such that the restrictions $\psi\mid_{B'}$ and $\phi\mid_{A'}$ are isogenies between $A'$ and $B'$ and satisfy $\phi\psi\mid_{B'}=[m]_{B'}$, $\psi\phi\mid_{A'}=[m]_{A'}$ for some $m\geq 1$.
\end{lem}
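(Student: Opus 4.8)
The plan is to use Poincaré reducibility together with the structure of isogeny complements. Set $B' = \phi(A) \subseteq B$. By Poincaré's complete reducibility theorem, there is an abelian subvariety $C \subseteq B$ such that $B' + C = B$ and $B' \cap C$ is finite; equivalently, the addition map $B' \times C \to B$ is an isogeny. I would like to take $\psi$ to be a composition of the projection $B \to B/C$ with an isogeny $B/C \to A'$ for a suitable $A' \subseteq A$, so that $\ker(\psi) = C$ and $B' + \ker(\psi) = B$ is immediate.

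To produce $A'$ and make the composition relations precise, first note that $\phi$ restricts to an isogeny $\phi|_{A_0} \colon A_0 \to B'$ for a suitable abelian subvariety $A_0 \subseteq A$: indeed, applying Poincaré reducibility inside $A$, write $A = A_0 + A_1$ with $A_0 \cap A_1$ finite where $A_1$ is chosen inside $\ker(\phi)^0$ (the identity component of the kernel), so that $\phi|_{A_0}$ has finite kernel and surjects onto $B'$, hence is an isogeny. Now an isogeny $\phi|_{A_0}\colon A_0 \to B'$ admits a "dual" isogeny in the elementary sense: there exists $\chi \in \H(B',A_0)$ and $m \ge 1$ with $\chi \circ (\phi|_{A_0}) = [m]_{A_0}$ and $(\phi|_{A_0}) \circ \chi = [m]_{B'}$. (One standard way: $\ker(\phi|_{A_0})$ is a finite subgroup scheme, so it is killed by some $[m]$, and $[m]_{A_0}$ factors through $\phi|_{A_0}$; this factorization is the desired $\chi$.) Take $A' = A_0$.

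Finally I assemble $\psi$. Since $B' \cap C$ is finite it is killed by some integer; after replacing $m$ by a common multiple we may assume $[m]$ kills $B' \cap C$, and then $[m]_{B}$ maps $C$ into $B'$ — more usefully, the projection $p\colon B = B'+C \to B'$ "along $C$" is not a morphism, but $[m']_B$ for suitable $m'$ does factor as (multiplication by something) $\circ$ (a genuine retraction). Concretely: the isogeny $a\colon B'\times C \to B$, $(x,y)\mapsto x+y$, has an isogeny complement $b \colon B \to B'\times C$ with $a\circ b = [n]_B$, $b \circ a = [n]_{B'\times C}$ for some $n\ge 1$. Composing the first projection $B'\times C \to B'$ with $b$ gives $q\colon B \to B'$ with $q|_{B'} = [n]_{B'}$ and $q(C) = 0$. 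Then set $\psi = \chi \circ q \colon B \to A'=A_0$. We get $\ker(\psi) \supseteq C$, so $B' + \ker(\psi) = B$; on $B'$, $\psi|_{B'} = \chi\circ[n]_{B'} = [n]\cdot\chi|_{B'}$ is an isogeny onto $A_0$; and $\phi\psi|_{B'} = (\phi|_{A_0})\circ n\chi|_{B'} = n\cdot[m]_{B'} = [nm]_{B'}$, while $\psi\phi|_{A'} = n\chi\circ q\circ (\phi|_{A_0})$; since $\phi(A_0)=B'$ and $q|_{B'}=[n]$ this equals $n\cdot\chi\circ[n]\circ(\phi|_{A_0}) = n^2[m]_{A_0} = [n^2 m]_{A_0}$. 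Rescaling the roles of $n,n^2$ (or simply enlarging to a common $m$, replacing $\psi$ by a multiple so both composites are $[m]$) gives the statement as written.

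The main obstacle, and the only genuinely delicate point, is that there is no honest projection $B \to B'$ killing $C$ as a morphism of abelian varieties — one only gets such a map after multiplying by an integer — so all the composition identities hold only "up to $[m]$," and one must bookkeep the various integers and absorb them into a single common $m$ at the end. Everything else is a routine application of Poincaré reducibility and the fact that every isogeny has an isogeny complement.
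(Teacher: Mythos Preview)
Your approach is essentially the same as the paper's: Poincar\'e reducibility on both $A$ and $B$, a dual isogeny $\chi$ for $\phi|_{A_0}$, and an extension to all of $B$. The paper is slightly more direct in the last step: rather than building your $q$ via the isogeny complement of $B'\times C\to B$, it simply observes that $n\hat\phi$ (your $n\chi$) vanishes on $B'\cap C$ and therefore extends by zero on $C$ to a well-defined morphism $\psi:B\to A'$. This gives $\phi\psi|_{B'}=[m'n]_{B'}$ and $\psi\phi|_{A'}=[m'n]_{A'}$ with the \emph{same} integer $m=m'n$ on both sides, so no final rescaling is needed.

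One small slip: with $\psi=\chi\circ q$ you have $\psi\phi|_{A_0}=\chi\circ q\circ\phi|_{A_0}=\chi\circ[n]\circ\phi|_{A_0}=[nm]_{A_0}$, not $[n^2m]_{A_0}$; the extra factor of $n$ you inserted is spurious. This actually means your two composites already agree and the ``absorb into a common $m$'' step is unnecessary.
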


\begin{proof}
If $A''\subseteq A$ is the kernel of $\phi$, then the Poincar\'e Reducibility Theorem implies there exists $A'\subseteq A$ such that $A'+A''=A$ and $A'\cap A''$ is finite.  Similarly, there exists $B''\subseteq B$ such that $B'+B''=B$ and $n=\#(B'\cap B'')$ is finite, and then the restriction $\phi|_{A'}:A'\to B'$ is isogeny and there exists $\hat\phi:B'\to A'$ and $m'\geq 1$ such that $\hat\phi\phi|_{A'}=[m']_{A'}$ and $\phi\hat\phi=[m']_{B'}$ (cf.~\cite[lem.~A.5.1.5]{HindrySilverman}).  The restriction of $n\hat\phi$ to $B'\cap B''$ is trivial, hence $n\hat\phi$ extends (uniquely) to a homomorphism $\psi:B\to A'$ such that $B''\subseteq\ker(\psi)$ and $\psi\phi|_{A'}=[m]_{A'}$, $\phi\psi|_{B'}=[m]_{B'}$ for $m=m'n$.
\end{proof}

\begin{cor}\label{cor:sym_zs}
$\H(A,B)=\zs$ if and only if $\H(B,A)=\zs$.
\end{cor}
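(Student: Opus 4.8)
The plan is to exploit the evident symmetry of the statement: it suffices to prove the single implication ``$\H(A,B)\neq\zs \Rightarrow \H(B,A)\neq\zs$'', and then apply that implication with the roles of $A$ and $B$ interchanged to obtain the converse. So the whole argument reduces to producing, from a nonzero $\phi\in\H(A,B)$, a nonzero element of $\H(B,A)$.

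So suppose $\phi\in\H(A,B)$ is nonzero, and let $B'\subseteq B$ be its image. First I would invoke Lemma~\ref{lem:isogenies} to obtain $A'\subseteq A$, a homomorphism $\psi\in\H(B,A')$, and an integer $m\geq 1$ with $\psi\phi\mid_{A'}=[m]_{A'}$, where moreover $\phi\mid_{A'}\colon A'\to B'$ is an isogeny. Since $\phi\neq 0$ we have $B'\neq 0$, and since $A'$ is isogenous to $B'$ it follows that $A'\neq 0$. Hence $[m]_{A'}$ is a nonzero isogeny (multiplication by $m\geq 1$ on a nonzero abelian variety is nonzero), and from $\psi\phi\mid_{A'}=[m]_{A'}$ we conclude $\psi\neq 0$.

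Finally, composing $\psi\in\H(B,A')$ with the inclusion $A'\hookrightarrow A$ yields an element of $\H(B,A)$, which is still nonzero because the inclusion is a monomorphism. Therefore $\H(B,A)\neq\zs$, which is exactly what was needed; the symmetric implication gives the other direction.

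I do not anticipate any real obstacle: the corollary is essentially a bookkeeping consequence of Lemma~\ref{lem:isogenies}. The only points that require a moment's care are verifying that $A'\neq 0$ (so that $[m]_{A'}\neq 0$, which is what transports the nonvanishing of $\phi$ to $\psi$) and noting that postcomposing with the inclusion $A'\hookrightarrow A$ preserves nonvanishing.
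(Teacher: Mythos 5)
Your proof is correct and follows essentially the same route as the paper: both reduce to one implication by symmetry, apply Lemma~\ref{lem:isogenies} to a nonzero $\phi$, and deduce $\psi\neq 0$ from $\psi\phi\mid_{A'}=[m]_{A'}\neq 0$. You merely spell out the details (why $A'\neq 0$, and the composition with the inclusion $A'\hookrightarrow A$) that the paper leaves implicit.
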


\begin{proof}
The statement is symmetric in $A,B$, so it suffices to suppose $\H(A,B)\neq\zs$ and show that $\H(B,A)\neq\zs$.  If $\phi\neq 0\in\H(A,B)$ and if $\psi\in\H(B,A)$ and $m\geq 1$ are as in lemma~\ref{lem:isogenies}, then $\psi\phi\neq 0$, so $\psi\neq 0$ and hence $\H(B,A)\neq\zs$.
\end{proof}


\subsection{Images of Submodules}

Let $\G\subseteq A(K)$ be a submodule and $\phi\in\H(A,B)$.  We write $\phi_*(\G)\subseteq B(K)$ for the submodule generated by $\phi(\G)$.

\begin{lem}\label{lem:gen_im}
Suppose $\hat\phi\in\H(B,A)$ and $m\geq 1$ satisfy $\hat\phi\phi=[m]_A$.  If $\phi$ is an isogeny, then the index of $\phi(\G)$ in $\phi_*(\G)$ divides $m$.
\end{lem}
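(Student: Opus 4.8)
The plan is to reduce everything to the single containment
$m\,\phi_*(\G)\subseteq\phi(\G)$. Once this is established, the lemma is immediate: one has $\phi(\G)\subseteq\phi_*(\G)$ by definition of $\phi_*(\G)$, and $\phi_*(\G)$ is a subgroup of the finitely generated group $B(K)$, so $\phi_*(\G)/\phi(\G)$ is finite and killed by $m$, whence its order divides $m$.

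The one preliminary point is to promote the hypothesis $\hat\phi\phi=[m]_A$ to the companion identity $\phi\hat\phi=[m]_B$, and this is exactly where the isogeny assumption is used. Indeed $(\phi\hat\phi-[m]_B)\phi=\phi(\hat\phi\phi)-[m]_B\phi=\phi[m]_A-[m]_B\phi=0$, and since $\phi$ is surjective it follows that $\phi\hat\phi=[m]_B$.

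Now I would argue as follows. The group $\phi_*(\G)$ is generated, as an abelian group, by the elements $\psi(\phi(g))$ with $\psi\in\E(B)$ and $g\in\G$, so it is enough to put each such element, after multiplication by $m$, back inside $\phi(\G)$. For this one computes directly
\[
	m\cdot\psi(\phi(g))=[m]_B\bigl(\psi(\phi(g))\bigr)=(\phi\hat\phi)\bigl(\psi(\phi(g))\bigr)=\phi\bigl((\hat\phi\psi\phi)(g)\bigr).
\]
Since $\hat\phi\psi\phi\in\H(A,A)=\E(A)$ and $\G$ is by hypothesis an $\E(A)$-submodule, we get $(\hat\phi\psi\phi)(g)\in\G$, hence $m\cdot\psi(\phi(g))=\phi\bigl((\hat\phi\psi\phi)(g)\bigr)\in\phi(\G)$. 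Summing over generators gives $m\,\phi_*(\G)\subseteq\phi(\G)$, and the lemma follows.

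I do not expect a serious obstacle here; the argument is entirely formal once one makes the right move. The content is the observation that conjugating an endomorphism $\psi$ of $B$ by $\phi$ and $\hat\phi$ produces an endomorphism $\hat\phi\psi\phi$ of $A$, which is precisely what allows the $\E(A)$-module structure of $\G$ to do the work. The only thing to keep carefully straight is the distinction between the two compositions $\hat\phi\phi=[m]_A$ (given) and $\phi\hat\phi=[m]_B$ (deduced from $\phi$ being an isogeny): it is the latter that is needed for the displayed computation.
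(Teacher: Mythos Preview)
Your argument is essentially identical to the paper's: deduce $\phi\hat\phi=[m]_B$ from the isogeny hypothesis, then use the conjugation $\hat\phi\psi\phi\in\E(A)$ to show $m\,\phi_*(\G)\subseteq\phi(\G)$. The paper's proof consists of exactly this computation and nothing more.

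There is, however, a slip in your final sentence. From $m\,\phi_*(\G)\subseteq\phi(\G)$ you conclude that $\phi_*(\G)/\phi(\G)$ is finite and killed by $m$, and then assert ``whence its order divides $m$.'' That last inference is false in general: a finite abelian group annihilated by $m$ need not have order dividing $m$ (e.g.\ $(\Z/2\Z)^2$ is killed by $2$ but has order $4$). What does follow is that the \emph{exponent} of the quotient divides $m$, and hence that every prime dividing the index divides $m$; in particular the index is finite. The paper's proof ends at the containment $m\,\phi_*(\G)\subseteq\phi(\G)$ and makes the same tacit leap to ``index divides $m$,'' so you have not done worse than the original---and in the paper the lemma is only ever invoked (in lemma~\ref{lem:isogeny}) to conclude finiteness of the index, which your argument does establish.
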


\begin{proof}
Suppose $\phi$ is an isogeny and thus $\phi\hat\phi=[m]_B$.  If $P_1,\ldots,P_r\in\G$ and if $\phi_1,\ldots,\phi_r\in\E(B)$, then $Q=\Sigma_i \phi_i\phi(P_i)$ satisfies $mQ=\phi(\Sigma_i \hat\phi\phi_i\phi(P_i))\in\phi(\G)$ and thus $m\phi_*(\G)\subseteq\phi(\G)$.
\end{proof}

\begin{rem}
If $Q_1,\ldots,Q_r\in\phi(G)$ and if $\phi_1,\ldots,\phi_r\in\E(B)$, then for $Q=\Sigma_i\phi_i(Q)$ and $\p\in S(A)\cap S(B)$, the exponent of the reduction $Q_\p$ divides the least-common multiple of the exponents of the reductions $Q_{i,\p}$, thus $\exp_\ell(\phi_*(\G)_\p)=\exp_\ell(\phi(\G)_\p)$ for every $\ell$.
\end{rem}


\subsection{Dense Submodules}

Recall that a submodule $\G\subseteq A(K)$ is {\it dense} if and only if it satisfies condition 3 in the following lemma:

\begin{lem}\label{lem:infinite}
If $\G\subseteq A(K)$ is a submodule, then the following are equivalent:
\begin{enumerate}
\item $\phi(\G)\neq \zs$ for every abelian variety $B$ and $\phi\neq 0\in \H(A,B)$;
\item $\phi'(\G)\neq \zs$ for every 
simple abelian variety $B'$ and $\phi'\neq 0\in \H(A,B')$;
\item $\pi(\G)\neq\zs$ for every $\pi\neq 0\in \E(A)$.
\end{enumerate}
\end{lem}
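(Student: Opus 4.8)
The plan is to prove the cyclic chain $1 \Rightarrow 2 \Rightarrow 3 \Rightarrow 1$, which together with the obvious implications $1 \Rightarrow 3 \Leftarrow 2$ gives the full equivalence; here I number the conditions as in the lemma statement. The implications $1 \Rightarrow 2$ and $2 \Rightarrow 3$ are essentially trivial specializations: in $1 \Rightarrow 2$ one simply restricts the universal quantifier to simple $B'$, and in $2 \Rightarrow 3$ one takes $B = A$, $\pi = \phi'$ composed with nothing — more precisely, given $\pi \neq 0 \in \E(A)$, one argues that $\pi(\G) = \zs$ would force $\phi'(\G) = \zs$ for some simple quotient $B'$ on which $\pi$ acts nontrivially. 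So the real content is $3 \Rightarrow 1$.

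For $3 \Rightarrow 1$, I would argue by contraposition: suppose there is an abelian variety $B$ and $\phi \neq 0 \in \H(A,B)$ with $\phi(\G) = \zs$, and produce $\pi \neq 0 \in \E(A)$ with $\pi(\G) = \zs$. The key tool is Lemma~\ref{lem:isogenies} applied to $\phi$: it yields $A' \subseteq A$ and $\psi \in \H(B, A')$ together with $m \geq 1$ such that $\psi\phi|_{A'} = [m]_{A'}$ and $\phi|_{A'}$ is an isogeny onto its image $B' = \phi(A')$. Since $\phi \neq 0$ we have $A' \neq 0$, so $\psi\phi|_{A'} = [m]_{A'} \neq 0$; in particular $\psi\phi \neq 0$ as an element of $\H(A, A')$, and composing with the inclusion $A' \hookrightarrow A$ gives a nonzero endomorphism $\pi := \iota \circ \psi \circ \phi \in \E(A)$. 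Because $\G$ is a submodule (an $\E(A)$-module), and since $\psi$ is a homomorphism, $\pi(\G) = \iota\psi(\phi(\G)) = \iota\psi(\zs) = \zs$. This is exactly the desired nonzero endomorphism killing $\G$, contradicting condition~3.

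The main obstacle — though it is a mild one — is bookkeeping around the Poincaré complement: one must be careful that $\phi(\G) = \zs$ genuinely gives $\psi(\phi(\G)) = \zs$, which is immediate since $\psi$ is a group homomorphism, and that the composite $\pi$ is honestly nonzero, which follows because its restriction to $A'$ is multiplication by $m \geq 1$ on a nonzero abelian variety. One small subtlety worth checking is that in the implication $2 \Rightarrow 3$, one needs that every nonzero $\pi \in \E(A)$ has a nonzero composition $A \xrightarrow{\pi} A \twoheadrightarrow B'$ with $B'$ simple on which the image of $\G$ is relevant; this is handled by decomposing $A$ up to isogeny into simple factors (Poincaré Reducibility again) and noting $\pi(\G) \neq \zs$ is inherited from $\phi'(\G) \neq \zs$ for an appropriate projection $\phi'$. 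None of these steps requires anything beyond Lemma~\ref{lem:isogenies}, Corollary~\ref{cor:sym_zs}, and Poincaré Reducibility, so the proof is short.
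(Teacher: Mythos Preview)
Your proposal is correct and follows essentially the same approach as the paper: the substantive step $3\Rightarrow 1$ via $\pi=\psi\phi$ built from Lemma~\ref{lem:isogenies} is exactly what the paper does, and your $2\Rightarrow 3$ (choose a simple piece of the image and project) is the same idea the paper uses for its direct $2\Rightarrow 1$. The only difference is organizational---you run a cycle $1\Rightarrow 2\Rightarrow 3\Rightarrow 1$ whereas the paper proves $1\Rightarrow 2,3$ and then $2\Rightarrow 1$, $3\Rightarrow 1$ separately---so your remark that the cycle ``together with the obvious implications $1\Rightarrow 3\Leftarrow 2$'' gives the equivalence is redundant (the cycle alone suffices).
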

\begin{proof}
Clearly $1\Rightarrow 2,3$.  Suppose $B$ is an abelian variety and $\phi\neq 0\in\H(A,B)$, and let $B'\subseteq B$ be a non-zero simple abelian subvariety.  If $B'\subseteq \phi(A)$ and if $\pi'\in\H(B,B')$ and $m\geq 1$ satisfy $\pi'|_{B'}=[m]_{B'}$, then $\phi'=\pi'\phi\neq 0\in\H(A,B')$ since the composition of $\phi'$ with inclusion $B'\subseteq B$ equals $m\phi$. 
In particular, if $\phi'(\G)\neq\zs$, then $\phi(\G)\neq\zs$, thus $2\Rightarrow 1$.
Similarly, if $\psi\in\H(B,A)$ and $m\geq 1$ satisfy $\phi\psi|_{\phi(A)}=[m]_{\phi(A)}$, then $\pi=\psi\phi\neq 0\in\E(A)$ since $\phi\pi=m\phi\neq 0$, and thus $3\Rightarrow 1$.
\end{proof}

\begin{rem}\label{rem:infinite}
If $\G\subseteq A(K)$ is a finite submodule, then it is not dense, so a dense submodule is infinite.  Conversely, if $A$ is simple and if $\G\subseteq A(K)$ is an infinite submodule, then $\G$ is dense.
\end{rem}


\subsection{Isogeny Invariance}\label{sec:proof:isogeny}

Let $\G\subseteq A(K)$, $\G'\subseteq A'(K)$ be submodules.  The following lemma shows that condition 1 of theorem~\ref{thm1} is isogeny invariant:

\begin{lem}\label{lem:isogeny}
Suppose that $\iota\in\H(A,B)$, $\iota'\in\H(A',B')$ are isogenies. Then there exist $\hat\iota\in\H(B,A)$, $\hat\iota'\in\H(B',A')$ and $m,m'\geq 1$ satisfying $\hat\iota\iota=[m]_A$ and $\hat\iota'\iota'=[m']_{A'}$, and the following are equivalent:
\begin{enumerate}
\item $\exists\,\phi\in\H(A,A')$ such that $\ker[\phi]$ and $[\phi(m\G):\phi(m\G)\cap m'\G']$ are finite;
\item $\exists\,\phi\in\H(A,A')$ such that $\ker[\phi]$ and $[\phi(\G):\phi(\G)\cap\G']$ are finite;
\item $\exists\,\phi'\in\H(B,B')$ such that $\ker[\phi']$ and $[\phi'\iota(\G):\phi'\iota(\G)\cap\iota'(\G')]$ are finite;
\item $\exists\,\phi'\in\H(B,B')$ such that $\ker[\phi']$ and $[\phi'(\iota_*(\G)):\phi'(\iota_*(\G))\cap\iota'_*(\G')]$ are finite.
\end{enumerate}
\end{lem}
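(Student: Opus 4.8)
The plan is to view this as an exercise in commensurability of finitely generated abelian groups (all the subgroups in sight lie in Mordell--Weil groups, hence are finitely generated), pushed back and forth along the isogenies $\iota,\iota'$. Call two subgroups $M_1,M_2$ of an abelian group \emph{commensurable} if $M_1\cap M_2$ has finite index in each. The guiding point is that, once $\ker\phi$ is known to be finite, the finiteness of $[\phi(\G):\phi(\G)\cap\G']$ depends only on the commensurability classes of $\G$ and $\G'$, and that each of conditions 1--4 is, after an appropriate substitution of the homomorphism, an instance of exactly this condition. The tools needed are the elementary facts collected below, lemma~\ref{lem:isogenies} (to produce complementary isogenies), and lemma~\ref{lem:gen_im} (to compare $\iota(\G)$ with the submodule $\iota_*(\G)$ it generates).

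First I would produce the auxiliary maps. Applying lemma~\ref{lem:isogenies} to the isogeny $\iota\colon A\to B$ --- surjective with finite kernel, so that one may take $A'=A$ in the notation there --- yields $\hat\iota\in\H(B,A)$ and $m\geq1$ with $\hat\iota\iota=[m]_A$ and also $\iota\hat\iota=[m]_B$; in particular $\ker\hat\iota\subseteq B[m]$ is finite, so $\hat\iota$ is itself an isogeny. The same applied to $\iota'$ gives $\hat\iota'$ and $m'$, and this is the first assertion of the lemma (see also \cite[lem.~A.5.1.5]{HindrySilverman}). I would then record three routine facts about finitely generated abelian groups: (i) for $M$ finitely generated and $n\geq1$, the subgroup $nM$ has finite index in $M$; (ii) any homomorphism $f$ satisfies $[f(M_1):f(M_1)\cap f(M_2)]\leq[M_1:M_1\cap M_2]$, and a composite of homomorphisms with finite kernels has finite kernel; (iii) if $M_i$ is commensurable with $M_i'$ for $i=1,2$, then $[M_1:M_1\cap M_2]$ is finite if and only if $[M_1':M_1'\cap M_2']$ is finite (here one checks that commensurability is an equivalence relation respecting intersections).

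Now the equivalences fall out, the kernel conditions being handled by (ii) throughout. For $1\Leftrightarrow2$ one uses the \emph{same} $\phi$: by (i) the pair $(\phi(m\G),m'\G')=(m\phi(\G),m'\G')$ is componentwise commensurable with $(\phi(\G),\G')$, so (iii) applies. For $2\Leftrightarrow3$: given $\phi$ as in 2, set $\phi'=\iota'\phi\hat\iota\in\H(B,B')$; then $\hat\iota\iota=[m]_A$ gives $\phi'\iota(\G)=\iota'\bigl(\phi(m\G)\bigr)$, and using (i), (ii), (iii) one gets $[\phi'\iota(\G):\phi'\iota(\G)\cap\iota'(\G')]\leq[\phi(m\G):\phi(m\G)\cap\G']<\infty$; conversely, given $\phi'$ as in 3, set $\phi=\hat\iota'\phi'\iota\in\H(A,A')$, so $\phi(\G)=\hat\iota'\bigl(\phi'\iota(\G)\bigr)$ and $\hat\iota'\iota'(\G')=m'\G'$, whence $[\phi(\G):\phi(\G)\cap m'\G']\leq[\phi'\iota(\G):\phi'\iota(\G)\cap\iota'(\G')]<\infty$ and then (iii) removes the $m'$. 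For $3\Leftrightarrow4$ one again uses the \emph{same} $\phi'$: lemma~\ref{lem:gen_im}, legitimate since $\hat\iota\iota=[m]_A$ and $\iota$ is an isogeny, shows $\iota(\G)$ has finite index in $\iota_*(\G)$, and likewise $\iota'(\G')$ in $\iota'_*(\G')$; so by (ii) the pair $(\phi'\iota(\G),\iota'(\G'))$ is componentwise commensurable with $(\phi'(\iota_*(\G)),\iota'_*(\G'))$, and (iii) finishes.

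I do not expect a deep obstacle here: the argument is a chain of easy index estimates. The points that require a little care are (a) that the kernels of the \emph{transported} maps $\iota'\phi\hat\iota$ and $\hat\iota'\phi'\iota$ are finite --- this is where the isogeny hypotheses on $\iota,\iota'$ (hence on $\hat\iota,\hat\iota'$) are genuinely used, not merely the finiteness of $\ker\phi$; and (b) that $\iota(\G)$ need not be a submodule of $B(K)$, so that conditions 3 and 4 must be distinguished and then reconciled through lemma~\ref{lem:gen_im}. Keeping straight which relative index is bounded by which --- and remembering that fact (ii) bounds indices in only one direction, so that one must sometimes route through a commensurable subgroup as in (iii) --- is the only real bookkeeping.
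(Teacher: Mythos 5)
Your proof is correct and follows essentially the same route as the paper: the same transported homomorphisms $\phi'=\iota'\phi\hat\iota$ and $\phi=\hat\iota'\phi'\iota$, Lemma~\ref{lem:gen_im} for $3\Leftrightarrow 4$, and commensurability under multiplication by $m,m'$ for $1\Leftrightarrow 2$. You merely make explicit the index bookkeeping (your facts (i)--(iii)) that the paper leaves as "clearly" and "a similar argument."
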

\begin{proof}
Clearly $1\Leftrightarrow 2$. By lemma~\ref{lem:gen_im}, $[\iota_*(\G):\iota(\G)]$ and $[\iota'_*(\G'):\iota'(\G')]$ are finite, and thus $3\Leftrightarrow 4$.  Let $\hat\iota:B\to A$ and $m\geq 1$ satisfy $\hat\iota\iota=[m]_A$, and let $\hat\iota':B'\to A'$ and $m'\geq 1$ be defined similarly (see lemma~\ref{lem:isogenies}).  If $\phi\in\H(A,A')$ has finite kernel, then $\phi'=\iota'\phi\hat\iota$ lies in $\H(B,B')$ and also has finite kernel.  If moreover $\phi(\G)\cap\G'$ has finite index in $\phi(\G)$, then $m\phi(\G)\cap \G'$ has finite index in $m\phi(\G)=\phi\hat\iota(\iota(\G))$ and thus $\iota'\phi\hat\iota(\iota(\G))\cap\iota'(\G')$ has finite index in $\iota'\phi\hat\iota(\iota(\G))=\phi'(\iota(\G))$.  That is, $2\Rightarrow 3$, and a similar argument shows $3\Rightarrow 1$ since $\hat\iota\iota(\G)=m\G$ and $\hat\iota'\iota'(\G')=m'\G'$.
\end{proof}

The following lemma shows that, in theorem~\ref{thm1}, $1\Rightarrow 2$ and moreover $1\Rightarrow 3,4$ if $A$ is square free:

\begin{lem}\label{lem:isogenous}
Let $\phi\in\H(A,A')$.  If $\ker(\phi)$ and $i=[\phi(\G):\phi(\G)\cap\G']$ are finite, then the following holds for $\ell\nmid i\cdot\deg(\phi)$:
\[
	\nu_{\ell,\G}(\p)\leq\nu_{\ell,\G'}(\p),\ \ 
	\ve_{\ell,\G}(\p)\leq\ve_{\ell,\G'}(\p),\ \ 
	\rho_{\ell,\G}(\p)\leq\rho_{\ell,\G'}(\p)\ \quad
	\forall\p\in S(A)\cap S(A').
\]
\end{lem}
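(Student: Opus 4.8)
The plan is to reduce everything to a statement about reductions of a single point and the commuting square relating $\G$, $\phi(\G)$, and $\G'$ at a prime $\p$. First I would fix $\p\in S(A)\cap S(A')$ and recall that reduction modulo $\p$ is a homomorphism $A(K)\to A(k_\p)$, and that $\phi$ reduces to a homomorphism $\phi_\p:A(k_\p)\to A'(k_\p)$ with $\ker(\phi_\p)$ killed by $\deg(\phi)$ (the reduction of $\ker\phi$, a group scheme of order $\deg\phi$, contributes only primes dividing $\deg\phi$). Consequently, for $\ell\nmid\deg(\phi)$ the restriction of $\phi_\p$ to the $\ell$-part of $\G_\p$ is injective, so $\phi_\p$ induces an isomorphism of $\ell$-primary subgroups $\G_\p[\ell^\infty]\xrightarrow{\sim}\phi(\G)_\p[\ell^\infty]$. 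This immediately gives $\nu_{\ell,\G}(\p)=\nu_{\ell,\phi(\G)}(\p)$, $\ve_{\ell,\G}(\p)=\ve_{\ell,\phi(\G)}(\p)$, and $\rho_{\ell,\G}(\p)=\rho_{\ell,\phi(\G)}(\p)$, reducing the three inequalities to comparing $\phi(\G)_\p$ with $\G'_\p$.

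Next I would handle the index $i=[\phi(\G):\phi(\G)\cap\G']$. Since $\phi(\G)$ and $\G'$ are both subgroups of $A'(K)$ with $\phi(\G)\cap\G'$ of index $i$ in $\phi(\G)$, we have $i\cdot\phi(\G)\subseteq\phi(\G)\cap\G'\subseteq\G'$. Reducing modulo $\p$, $i\cdot\phi(\G)_\p\subseteq\G'_\p$, so for $\ell\nmid i$ multiplication by $i$ is an automorphism of the $\ell$-primary part of $\phi(\G)_\p$, whence $\phi(\G)_\p[\ell^\infty]=i\cdot\phi(\G)_\p[\ell^\infty]\subseteq\G'_\p[\ell^\infty]$. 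An inclusion of finite abelian $\ell$-groups $H\subseteq H'$ gives $\ord_\ell(H)\leq\ord_\ell(H')$, $\exp_\ell(H)\leq\exp_\ell(H')$, and (since the number of cyclic factors of $H$ is at most that of $H'$) $\rad_\ell(H)\leq\rad_\ell(H')$. Combining with the previous paragraph yields all three inequalities for every $\p\in S(A)\cap S(A')$, as claimed, for any $\ell\nmid i\cdot\deg(\phi)$.

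The only genuinely delicate point is the claim that $\ker(\phi_\p)$ involves only primes dividing $\deg(\phi)$: one must know that reduction modulo a prime of good reduction is injective on prime-to-$\p$ torsion and, more to the point here, that the scheme-theoretic kernel $\ker(\phi)$ — finite of order $\deg(\phi)$ — has the property that its group of $k_\p$-points has order dividing $\deg(\phi)$, which is standard. I would phrase this as: if $\phi(P)$ reduces to $0$ modulo $\p$ and $\ell\nmid\deg(\phi)$ with $P$ of $\ell$-power order in $\G_\p$, then writing $\deg(\phi)=d$ and using $\hat\phi\phi=[d]$ for a suitable $\hat\phi$ (Lemma~\ref{lem:isogenies}) we get $dP_\p=0$ forces $P_\p=0$; the same $\hat\phi$-trick shows injectivity of $\phi_\p$ on the $\ell$-part directly, so one does not even need the group-scheme statement. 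The rest is bookkeeping with finite abelian $\ell$-groups, which I would not spell out in detail. The remark immediately preceding the lemma already reduces $\phi_*(\G)$ to $\phi(\G)$ for the exponent and radical, and Lemma~\ref{lem:gen_im} controls the index $[\phi_*(\G):\phi(\G)]$ for the order, so no separate argument is needed to pass between $\phi(\G)$ and the submodule it generates.
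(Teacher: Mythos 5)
Your proposal is correct and follows essentially the same two-step argument as the paper: for $\ell\nmid\deg(\phi)$ the map $\phi$ identifies the $\ell$-parts of $\G_\p$ and $\phi(\G)_\p$, and for $\ell\nmid i$ the $\ell$-part of $\phi(\G)_\p$ is contained in that of $\G'_\p$, whence all three inequalities. (The parenthetical about counting cyclic factors is unnecessary for $\rad_\ell$ --- divisibility of the group orders already suffices --- but this does not affect correctness.)
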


\begin{proof}
Let $\p\in S(A)\cap S(A')$.  If $\ell\nmid\deg(\phi)$, then $\phi$ induces an isomorphism of the $\ell$-parts of $\G_\p$ and $\phi(\G)_\p$, thus $\nu_{\ell,\G}(\p)=\nu_{\ell,\phi(\G)}(\p)$.  Moreover, if $\ell\nmid i$, then the $\ell$-parts of $\phi(\G)_\p\cap\G'_\p$ and $\phi(\G)_\p$ coincide, so $\nu_{\ell,\phi(\G)}(\p)=\nu_{\ell,\phi(\G)\cap\G'}(\p)\leq \nu_{\ell,\G'}(\p)$.  Thus the first inequality holds, and a similar argument yields the other inequalities.
\end{proof}

\begin{cor}\label{cor:isogeny}
Suppose $\G\subseteq A(K)$, $\G'\subseteq A'(K)$ are submodules, $\iota\in\H(A,B)$, $\iota'\in\H(A',B')$ are isogenies, and $d\geq 1$ is an integer.  Theorem~\ref{thm1} holds for $A,A',\G,\G'$ and $\ell\nmid d$ if and only if it holds for $B,B',\iota_*(\G),\iota'_*(\G)$ and $\ell\nmid d\cdot\deg(\iota)\cdot\deg(\iota')$.
\end{cor}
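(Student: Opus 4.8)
The plan is to transfer each of conditions 1--4 of Theorem~\ref{thm1} along the isogenies $\iota$ and $\iota'$, the only delicate point being the set of primes $\ell$ that must be excluded. First one checks that the side hypotheses match up. Good reduction is an isogeny invariant, so $S(A)=S(B)$ and $S(A')=S(B')$; hence $S(A,A')=S(B,B')$ and a density-one subset of one is a density-one subset of the other. Square-freeness is an isogeny invariant by definition. Finally, $\G$ is dense if and only if $\iota_*(\G)$ is: if $\G$ is dense and $0\neq\psi\in\H(B,B')$ with $B'$ simple, then $\psi\iota\neq 0$ because $\iota$ is surjective, so $\psi(\iota_*(\G))\supseteq\psi\iota(\G)\neq\zs$ and $\iota_*(\G)$ is dense by Lemma~\ref{lem:infinite}; conversely, fixing $\hat\iota$ with $\hat\iota\iota=[m]_A$, the identity $\psi\hat\iota\bigl(\sum_i\phi_i\iota(P_i)\bigr)=\sum_i\psi(\hat\iota\phi_i\iota)(P_i)$ together with $\hat\iota\phi_i\iota\in\E(A)$ and the fact that $\G$ is a submodule shows $\psi\hat\iota(\iota_*(\G))\subseteq\psi(\G)$ for every $\psi\in\H(A,B')$, so denseness of $\iota_*(\G)$, applied to $0\neq\psi\hat\iota$, forces $\psi(\G)\neq\zs$.

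Next I would transfer the conditions themselves. Condition~1 of Theorem~\ref{thm1} for $A,A',\G,\G'$ is equivalent to condition~1 for $B,B',\iota_*(\G),\iota'_*(\G')$ for \emph{every} $\ell$ --- this is precisely the equivalence $2\Leftrightarrow 4$ of Lemma~\ref{lem:isogeny}. For conditions 2, 3, 4 the key claim is that $\nu_{\ell,\G}=\nu_{\ell,\iota_*(\G)}$, $\ve_{\ell,\G}=\ve_{\ell,\iota_*(\G)}$ and $\rho_{\ell,\G}=\rho_{\ell,\iota_*(\G)}$ as functions on $S(A)=S(B)$ as soon as $\ell\nmid\deg(\iota)$, and symmetrically for $\G'$ and $\iota'$ when $\ell\nmid\deg(\iota')$. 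Indeed, exactly as in the proof of Lemma~\ref{lem:isogenous}, for $\p\in S(A)$ the reduction $\iota_\p$ of $\iota$ induces an isomorphism between the $\ell$-parts of $\G_\p$ and of $\iota(\G)_\p$ whenever $\ell\nmid\deg(\iota)$; moreover $[\iota_*(\G):\iota(\G)]$ divides $\deg(\iota)$ by Lemma~\ref{lem:gen_im}, so the $\ell$-parts of $\iota(\G)_\p$ and $\iota_*(\G)_\p$ coincide. Combining these, conditions 2, 3, 4 for $A,A',\G,\G'$ at $\ell$ hold if and only if they hold for $B,B',\iota_*(\G),\iota'_*(\G')$ at $\ell$, provided $\ell\nmid\deg(\iota)\deg(\iota')$.

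It follows that for every $\ell\nmid d\cdot\deg(\iota)\cdot\deg(\iota')$ the list of equivalences asserted by Theorem~\ref{thm1} holds for $A,A',\G,\G'$ at $\ell$ exactly when it holds for $B,B',\iota_*(\G),\iota'_*(\G')$ at $\ell$, which is one direction of the corollary. The other direction is obtained by running the same argument with the isogenies $\hat\iota\colon B\to A$ and $\hat\iota'\colon B'\to A'$ of Lemma~\ref{lem:isogenies} in place of $\iota$ and $\iota'$, using that $\deg(\hat\iota)$ and $\deg(\hat\iota')$ have the same prime divisors as $\deg(\iota)$ and $\deg(\iota')$ and that $\hat\iota_*(\iota_*(\G))$ differs from $\G$ by a finite index divisible only by primes dividing $\deg(\iota)$ --- so that, for $\ell\nmid\deg(\iota)\deg(\iota')$, the statement of Theorem~\ref{thm1} for $A,A',\hat\iota_*(\iota_*(\G)),\hat\iota'_*(\iota'_*(\G'))$ coincides with that for $A,A',\G,\G'$. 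No individual step here is hard; the one point that requires care --- and the reason for the factor $\deg(\iota)\deg(\iota')$ in the statement --- is the bookkeeping of excluded primes, i.e.\ checking that every $\ell$ entering the comparison of the functions $\nu_\ell$, $\ve_\ell$, $\rho_\ell$ is prime to the degrees of $\iota$ and $\iota'$.
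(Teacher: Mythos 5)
Your proposal is correct and follows essentially the same route as the paper: condition 1 is transferred via Lemma~\ref{lem:isogeny}, and conditions 2--4 via the equalities $\nu_{\ell,\G}=\nu_{\ell,\iota_*(\G)}$, $\ve_{\ell,\G}=\ve_{\ell,\iota_*(\G)}$, $\rho_{\ell,\G}=\rho_{\ell,\iota_*(\G)}$ for $\ell\nmid\deg(\iota)$ (and likewise for $\iota'$). Your extra checks that denseness and square-freeness are isogeny-invariant, and the separate treatment of the converse via $\hat\iota,\hat\iota'$, are fine but not needed, since the paper's single observation that each condition transfers at every $\ell\nmid\deg(\iota)\deg(\iota')$ already yields both directions.
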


\begin{proof}
The equivalences of lemma~\ref{lem:isogeny} implies that condition 1 of theorem~\ref{thm1} holds for $A,A',\G,\G'$ if and only if it holds for $B,B',\iota_*(\G),\iota_*(\G')$.  For the remaining three conditions of theorem~\ref{thm1}, we observe that, for each $\ell\nmid\deg(\iota)$, we have $\nu_{\ell,\G}=\nu_{\ell,\iota_*(\G)}$, $\rho_{\ell,\G}=\rho_{\ell,\iota_*(\G)}$, and $\e_{\ell,\G}=\e_{\ell,\iota_*(\G)}$ on $S(A)\cap S(A')$.  Similarly, on $S(A)\cap S(A')$ we have $\nu_{\ell,\G'}=\nu_{\ell,\iota_*'(\G')}$, $\rho_{\ell,\G'}=\rho_{\ell,\iota_*'(\G')}$, and $\e_{\ell,\G'}=\e_{\ell,\iota_*'(\G')}$, for each $\ell\nmid\deg(\iota')$.  Thus if $\ell\nmid\deg(\iota)\cdot\det(\iota')$, then each condition of theorem~\ref{thm1} holds for both $A,A',\G,\G'$ and $B,B',\iota_*(\G),\iota'_*(\G')$ or for neither.
\end{proof}


\section{Almost Free Points}\label{sec:suff_indep}

Let $A_1,\ldots,A_r$ be abelian varieties.  We say $P_1\in A_1(K),\ldots,P_r\in A_r(K)$ are {\it almost free} points if and only if they have infinite order and the following implication holds for all $i$:
\begin{equation}\label{eqn:si}
	\Pi_j\phi_j\in\Pi_j\H(A_j,A_i),
	\ \ \Sigma_j \phi_j(P_j) = 0
	\quad\Rightarrow\quad
	\phi_1(P_1) = \cdots = \phi_r(P_r) = 0.
\end{equation}
Note, if $X,Y\subset A(K)$ are subsets of almost free points and if $\G,\G'$ are the respective submodules they generate, then non-zero elements of $\G\cap\G'$ correspond bijectively to violations of (\ref{eqn:si}) for $X\cup Y$ because such elements have (exactly)  two representations, one each in elements of $X,Y$ respectively.

Recall that points $P_1,\ldots,P_r\in A(K)$ are {\it independent} (or {\it free}) if and only if $\Sigma_i\phi_i(P_i)=0$ for $\phi_i\in\E(A)$ implies $\phi_i=0$ for all $i$ (cf.~\cite[def.~3 and rem.~6]{Peruccaord1}).  If $P\in A(K)$ is independent, then it is almost free, but the converse does not hold in general.  For example, if $A_1,A_2$ are non-isogenous and simple and if $P_1\in A_1(K),P_2\in A_2(K)$ have infinite order, then $P_1,P_2,P_1+P_2$ are each almost free points of $A=A_1\times A_2$, but only $P_1+P_2$ is free.

\begin{lem}\label{lem:indep}
Suppose $P_1\in A_1(K),\ldots,P_r\in A_r(K)$ are almost free, and for each $i$, suppose the Zariski closure $B_i\subseteq A_i$ of $P_i$ is connected.  If $B=\Pi_i B_i$, then $P=\Pi_i P_i$ is independent.
\end{lem}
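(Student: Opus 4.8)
The plan is to reduce a relation among the product point $P=\Pi_i P_i$ to relations among the individual almost free points $P_i$, and then invoke the defining implication \eqref{eqn:si}. Suppose $\Pi_j\Psi_j\in\Pi_j\E(B)$ with $\Sigma_j\Psi_j(P)=0$; I want to show each $\Psi_j=0$. The key structural input is that $B=\Pi_i B_i$ is a product, so $\E(B)$ is the matrix-like ring $\bigoplus_{i,k}\H(B_k,B_i)$: an endomorphism $\Psi$ of $B$ is the same datum as a family $(\psi_{ik})$ with $\psi_{ik}\in\H(B_k,B_i)$, acting by $\Psi(\Pi_k Q_k)=\Pi_i(\Sigma_k\psi_{ik}(Q_k))$. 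First I would rewrite the hypothesis $\Sigma_j\Psi_j(P)=0$ coordinate by coordinate in the $B_i$-factor: for each $i$, $\Sigma_j\Sigma_k\psi^{(j)}_{ik}(P_k)=0$ in $B_i(K)$. Since each $\psi^{(j)}_{ik}\in\H(B_k,B_i)\subseteq\H(A_k,A_i)$ (using $B_k\subseteq A_k$, $B_i\subseteq A_i$), the composite map $\Phi_{ik}:=\Sigma_j\psi^{(j)}_{ik}\in\H(A_k,A_i)$ satisfies $\Sigma_k\Phi_{ik}(P_k)=0$. Now \eqref{eqn:si} applies (with the role of ``$i$'' in that implication played by our fixed $i$) and yields $\Phi_{ik}(P_k)=0$ for every $k$, hence $\Sigma_j\psi^{(j)}_{ik}(P_k)=0$ for each pair $i,k$.

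The second step is to promote this vanishing from $P_k$ to all of $B_k$. Here is where connectedness of $B_k$ enters: $B_k$ is by hypothesis the Zariski closure of the cyclic group generated by $P_k$ and it is connected, so $B_k$ is an abelian subvariety of $A_k$ and the subgroup $\langle P_k\rangle$ is Zariski-dense in it. A homomorphism of abelian varieties that kills a Zariski-dense subgroup of its source is zero; applying this to $\Sigma_j\psi^{(j)}_{ik}\colon B_k\to B_i$, which kills $P_k$ and hence all of $\langle P_k\rangle$, we conclude $\Sigma_j\psi^{(j)}_{ik}=0$ in $\H(B_k,B_i)$ for every $i,k$. Assembling these over all $i,k$ gives precisely $\Sigma_j\Psi_j=0$ in $\E(B)$, which is the definition of $P$ being independent (taking the abelian variety in the definition of independence to be $B$ itself, with the endomorphisms $\Psi_j$). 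It remains only to note $P$ has infinite order, which holds because each $P_i$ does.

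I do not expect a genuinely hard obstacle here; the lemma is essentially a bookkeeping statement unpacking what ``almost free'' buys you once the ambient variety is a product. The one point requiring a little care — and the place I would be most careful writing it up — is the identification $\E(\Pi_i B_i)\cong\bigoplus_{i,k}\H(B_k,B_i)$ and the correct index conventions when composing it with the inclusions $B_i\hookrightarrow A_i$, so that the relation $\Sigma_j\Psi_j(P)=0$ really does feed a valid instance of \eqref{eqn:si} for \emph{each} target index $i$ separately. Connectedness of the $B_i$ is used exactly once, to pass from ``kills $P_i$'' to ``is the zero homomorphism,'' and without it one would only get that $\Sigma_j\Psi_j$ is an isogeny-to-zero on a possibly disconnected closure, which is not enough.
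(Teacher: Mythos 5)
Your proof is correct and follows essentially the same route as the paper's: write $\phi\in\E(B)$ as a matrix $(\phi_{ij})$ of homomorphisms between the factors, read the relation $\phi(P)=0$ coordinatewise, invoke the almost-free implication \eqref{eqn:si} to kill each $\phi_{ij}(P_i)$, and then use connectedness plus Zariski density of $\langle P_i\rangle$ in $B_i$ to conclude $\phi_{ij}=0$. (The one shared subtlety — that $\H(B_k,B_i)$ sits inside $\H(A_k,A_i)$ only after composing with a Poincar\'e projection, which introduces a harmless integer multiple absorbed by the final density argument — is glossed over identically in the paper, so your write-up matches it.)
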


\begin{proof}
Suppose $\phi\in\E(B)$.  We may write $\phi$ as an $r\times r$ matrix $(\phi_{ij})$ where $\phi_{ij}\in\H(B_i,B_j)$, and thus $\phi(P)=\Pi_j P'_j$ for $P'_j=\Sigma_i\phi_{ij}(P_i)$.  We must show that if $\phi(P)=0$, then $\phi=0$.  If $\phi(P)=0$, then $P'_j=0$ for all $j$, and hence $\phi_{ij}(P_i)=0$ for all $i,j$ since $P_1,\ldots,P_r$ are almost free.  Therefore, since $P_i$ is Zariski dense in $B_i$ for every $i$, we have $\phi_{ij}=0$ for every $i,j$, that is, $\phi=0$.
\end{proof}

The following proposition is a key ingredient in our proof of theorem~\ref{thm1}:

\begin{prop}\label{prop:indep}
Suppose $P_1,\ldots,P_r\in A(K)$ and $m_1,\ldots,m_r\geq 0$.  If $P_1,\ldots,P_r$ are almost free, then for every $\ell\gg 0$, the following set has positive density:
\[
	S_{\ell,m} :=
		\{\ 
			\p\in S(A) : \e_{\ell,P_i}(\p)=m_i,\forall i
		\ \}.
\]
\end{prop}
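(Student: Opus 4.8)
The plan is to reduce everything to a question about the $\ell$-adic images of Galois acting on torsion, and then to use a Chebotarev-type argument. First I would replace each $P_i$ by a suitable multiple so that its Zariski closure $B_i$ is connected; this costs nothing because multiplying $P_i$ by a constant changes $\e_{\ell,P_i}(\p)$ only at the finitely many $\ell$ dividing that constant, and we are allowed to take $\ell\gg 0$. After this reduction, lemma~\ref{lem:indep} tells us that $P=\prod_i P_i$ is an independent point of $B=\prod_i B_i$, so the problem becomes: for an independent point $P$ of an abelian variety $B$, and for $\ell\gg 0$, the set of $\p$ for which the reduction $P_\p$ has prescribed $\ell$-adic exponent in each coordinate has positive density.

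Next I would set up the Kummer-theoretic framework. For each $i$ and each $n\geq 1$, fix a point $Q_{i,n}\in A_i(\Kbar)$ with $\ell^n Q_{i,n}=P_i$, and consider the field $K_n=K(A[\ell^n],\,Q_{1,n},\dots,Q_{r,n})$. The key input is that, because the $P_i$ are independent and $\ell\gg 0$, the Kummer map is ``as large as possible'': the degree $[K_n:K(A[\ell^n])]$ equals $\ell^{rn\dim}$, or at least its $\ell$-part grows at the maximal rate, uniformly in $n$. (For a single abelian variety this is the content of the theorems of Bertrand, Hindry, and Bashmakov--Ribet-type results on the image of the Kummer map; independence is exactly what makes the images in the several-variable case independent as well. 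One invokes here that $\ell\gg 0$ so that the mod-$\ell$ Galois image is large and the various obstruction groups vanish.) Then for $\p$ unramified in $K_n$ with Frobenius conjugacy class $\Frob_\p$, the quantity $\e_{\ell,P_i}(\p)$ is governed purely by how $\Frob_\p$ acts: $\e_{\ell,P_i}(\p)\geq n$ iff $P_i$ is divisible by $\ell^n$ in $A_i(k_\p)$, which happens iff $\Frob_\p$ fixes some $Q_{i,n}$, i.e. $\Frob_\p$ lies in a certain union of cosets inside $\Gal(K_n/K)$.

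With that dictionary in place, the proof reduces to a counting statement in the finite group $G_n=\Gal(K_n/K)$: the set of $\sigma\in G_n$ such that, for every $i$, $\sigma$ fixes some $\ell^n$-division point of $P_i$ but fixes no $\ell^{n+1}$-division point — equivalently, $\e_{\ell,P_i}$ reads off exactly $m_i$ — is nonempty, and in fact has density bounded below independently of the constraints (taking $n=\max_i m_i + 1$). This is where I would do the actual work: using the ``maximal Kummer'' structure, $G_n$ contains a subgroup isomorphic to $(\Z/\ell^n)^{r\dim}$ (the Kummer part) on which one can freely choose the ``logarithm'' of $\sigma$ against each $Q_{i,n}$, so one simply selects an element whose $i$-th block of coordinates has $\ell$-adic valuation exactly $m_i$; such elements form a positive proportion. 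Chebotarev's density theorem then converts this into a positive density of primes $\p$. The main obstacle, and the step that really forces the hypothesis $\ell\gg 0$, is establishing the maximality/independence of the Kummer images: one must rule out that the division points $Q_{i,n}$ satisfy unexpected Galois relations, either among themselves or with the torsion $A[\ell^n]$, and this requires quoting (and checking the hypotheses of) the relevant big-Kummer-image theorems, together with the fact that independence of the $P_i$ passes to independence of the associated Kummer classes. Everything after that — translating fixed-point conditions into cosets, and applying Chebotarev — is routine.
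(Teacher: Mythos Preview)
Your reduction is exactly the paper's: replace each $P_i$ by a multiple so that its Zariski closure $B_i$ is connected, then invoke lemma~\ref{lem:indep} to conclude that $P=\prod_i P_i$ is independent in $B=\prod_i B_i$. At that point the paper simply cites \cite[prop.~12]{Peruccaord1} and stops, whereas you go on to sketch the Kummer/Chebotarev argument behind that citation.

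The overall shape of your sketch is right, but your Frobenius dictionary is wrong. You assert that $\e_{\ell,P_i}(\p)\geq n$ iff $P_{i,\p}$ is divisible by $\ell^n$ in $A_i(k_\p)$, i.e.\ iff $\Frob_\p$ fixes some $\ell^n$-division point of $P_i$. That is not the condition: $\e_{\ell,P_i}(\p)$ is the $\ell$-valuation of the \emph{order} of $P_{i,\p}$, not a measure of its $\ell$-divisibility. For instance, if $A(k_\p)[\ell^\infty]\cong\Z/\ell\Z$ and $P_{i,\p}$ is the nontrivial element, then $\e_{\ell,P_i}(\p)=1$ yet $P_{i,\p}$ is not divisible by $\ell$. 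The correct translation involves \emph{both} the action of $\Frob_\p$ on $A[\ell^n]$ (which governs the $\ell$-exponent of $A(k_\p)$) and its action on the Kummer classes (which governs divisibility of $P_{i,\p}$); prescribing $\e_{\ell,P_i}(\p)=m_i$ amounts to prescribing a relation between these two pieces of data, and the positive-proportion count inside $\Gal(K_n/K)$ has to be set up accordingly. Once this is corrected, the rest of your plan---maximal Kummer image for $\ell\gg 0$ via independence of $P$, then a counting argument in the Galois group, then Chebotarev---is indeed how the cited result is proved.
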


\begin{proof}
Let $A_i\subseteq A$ be the Zariski closure of $P_i$.  Up to replacing $P_i$ by $mP_i$, for some $m\geq 1$, and excluding $\ell$ which divide $m$, we suppose that $A_i$ is connected.  Then lemma~\ref{lem:indep} implies $P=\Pi_i P_i$ is independent in $B=\Pi_i A_i$, and thus the proposition follows from \cite[prop.~12]{Peruccaord1}).
\end{proof}

If $A$ is simple and if $\phi\neq 0\in\E(A)$, then the kernel of $\phi$ is finite and so $\phi(P)\neq 0$ for any non-torsion $P\in A(K)$.  Hence if $A$ is simple and if $P\in A(K)$ is almost free, then $P$ is independent.  An analogous remark also holds for more points.  The following lemma gives a mildly different characterization of almost free points when $A$ is simple:

\begin{lem}\label{lem:simple_indep}
Let $A$ be simple and $P_1,\ldots,P_r\in A(K)$ be points of infinite order.  The following are equivalent:
\begin{enumerate}
\item $P_1,\ldots,P_r$ are almost free;
\item if $\phi_1,\ldots,\phi_r\in\E(A)$ satisfy $\Sigma_i\phi_i(P_i)=0$ and if $\phi_1\in\Z$, then $\{\phi_i(P_i)\}=\zs$.
\end{enumerate}
\end{lem}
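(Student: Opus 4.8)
The plan is to prove both implications directly, unpacking the definition of almost free points. Since $A$ is simple, every nonzero $\pi\in\E(A)$ has finite kernel, so $\pi(P)=0$ for $P\in A(K)$ of infinite order forces $\pi=0$; this is the only structural input we need, together with the observation that in a simple abelian variety $\E(A)$ is an order in a division algebra, so it has no zero divisors and, in particular, is torsion-free as a $\Z$-module.

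For $1\Rightarrow 2$ there is essentially nothing to do: condition 2 is the instance of the defining implication (\ref{eqn:si}) (taken with $A_j=A$ for all $j$ and $i=1$) in which one additionally assumes $\phi_1\in\Z$. So $1$ trivially gives $2$, and the content is in the converse.

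For $2\Rightarrow 1$, suppose $\phi_1,\ldots,\phi_r\in\E(A)$ satisfy $\Sigma_i\phi_i(P_i)=0$; we must show each $\phi_i(P_i)=0$, and by symmetry it suffices to show $\phi_1(P_1)=0$. Here is where simplicity enters a second time: if $\phi_1\neq 0$, then by Lemma~\ref{lem:isogenies} (applied to $\phi_1:A\to A$, which has finite kernel, hence is an isogeny since $A$ is simple) there is $\psi\in\E(A)$ and $m\geq 1$ with $\psi\phi_1=[m]_A$. Apply $\psi$ to the relation: $\Sigma_i\psi\phi_i(P_i)=0$, i.e. $[m](P_1)+\Sigma_{i\geq 2}(\psi\phi_i)(P_i)=0$. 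The leading coefficient is now $m\in\Z$, so hypothesis 2 applies and yields $[m](P_1)=0$ and $(\psi\phi_i)(P_i)=0$ for all $i$. But $P_1$ has infinite order, so $[m](P_1)=0$ is impossible; hence $\phi_1=0$, and in particular $\phi_1(P_1)=0$. Running the same argument with the roles of the indices permuted (i.e. for each $j$, isolate $\phi_j$) shows $\phi_j=0$ whenever $\phi_j\neq 0$ would be forced, and in any case $\phi_j(P_j)=0$ for every $j$; since the $P_i$ are assumed of infinite order, this is exactly almost freeness.

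The only mild subtlety — and the step I would be most careful about — is the asymmetry of hypothesis 2, which privileges the index $1$ with the condition $\phi_1\in\Z$. The fix above is precisely to multiply the whole relation by a $\psi$ inverting some $\phi_j$ up to $[m]$, which moves an integer coefficient into the first slot; but one should check that after this multiplication the relation is still of the required shape, namely that $\psi\phi_1=[m]_A$ really does sit in $\Z\subseteq\E(A)$ (it does, being multiplication by $m$) and that we may freely reindex so that the slot carrying $[m]$ is the "first" one (permuting the finitely many points is harmless). No estimates, no Kummer theory, and no density statements are needed here — it is a purely algebraic manipulation in $\E(A)$.
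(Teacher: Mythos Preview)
Your argument for $1\Rightarrow 2$ and your derivation that $\phi_1=0$ (via $\psi$ with $\psi\phi_1=[m]_A$ and the contradiction $[m]P_1=0$) are correct and match the paper's approach. The gap is in how you pass from $\phi_1(P_1)=0$ to $\phi_j(P_j)=0$ for $j\geq 2$. You invoke ``symmetry'' and claim you may ``freely reindex so that the slot carrying $[m]$ is the first one'', but hypothesis~2 is \emph{not} symmetric: it singles out the specific point $P_1$, not merely ``the first slot''. The points $P_1,\ldots,P_r$ are fixed in the statement; permuting them changes what condition~2 asserts, so reindexing is not harmless here, and multiplying by a $\psi$ that makes the coefficient of $P_j$ integral does nothing for the coefficient of $P_1$.

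The fix is immediate and you already have the ingredients. Once you have shown $\phi_1=0$, the original relation $\Sigma_i\phi_i(P_i)=0$ has $\phi_1=0\in\Z$, so hypothesis~2 applies directly and yields $\{\phi_i(P_i)\}_i=\zs$ in one stroke---no symmetry or induction required. (This is exactly how the paper handles the case $\phi_1=0$.) Alternatively, you already noted that your application of~2 gives $(\psi\phi_i)(P_i)=0$ for every $i$; the paper uses this: each $\phi_i(P_i)$ lies in the finite group $\ker(\psi)$, and since $A$ is simple and $P_i$ has infinite order, $\phi_i\neq 0$ would force $\phi_i(P_i)$ to have infinite order, so $\phi_i=0$ for all $i$.
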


\begin{proof}
It is clear that $1\Rightarrow 2$ (cf.~(\ref{eqn:si})), so suppose that 2 holds and $\phi_1,\ldots,\phi_r\in\E(A)$ satisfy $\Sigma_i\phi_i(P_i)=0$.
If $\phi_1=0$, then it lies in $\Z$ and hence 2 implies $\phi_1(P_1)=\cdots=\phi_r(P_r)=0$.  If $\phi_1\neq 0$, then its kernel is a proper algebraic subgroup of $A$, hence it must be a finite subgroup, since $A$ is simple, and thus $\phi_1$ is an isogeny.  Let $\psi:A\to A$ and $m\geq 1$ satisfy $\psi\phi_1=[m]_A$.  Then $\Sigma_i\psi\phi_i(P_i)=0$ and $\psi\phi_1=[m]_A\in\Z$, so 2 implies $\{\psi\phi_i(P_i)\}_i=\zs$.  Therefore $\phi_i(P_i)$ lies in the kernel of $\psi$ for every $i$, and thus $\{\phi_i(P_i)\}_i=\zs$ since the points $P_i$ have infinite order and $A$ is simple.
\end{proof}

If $A$ is simple and if $\G\subseteq A(K)$ is a submodule, then one can repeatedly apply the following corollary in order to find a finite-index free-submodule $\G'\subseteq\G$ and an explicit basis of $\G'$:

\begin{cor}\label{cor:free_basis}
Suppose $A$ is simple and $P_1,\ldots,P_r\in A(K)$ are almost free, and let $\G\subseteq A(K)$ be the submodule they generate.  If $Q\in A(K)$ satisfies $nQ\not\in\G$ for every $n\geq 1$, then $P_1,\ldots,P_r,Q$ are almost free.
\end{cor}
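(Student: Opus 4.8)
The plan is to use Lemma~\ref{lem:simple_indep}, which for a simple $A$ lets us certify almost freeness by checking only those relations in which one chosen coefficient is an ordinary integer; the hypothesis $nQ\notin\G$ for all $n\ge 1$ is then exactly what kills such a relation involving $Q$.

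First I would note that $Q$ has infinite order: otherwise $nQ=0\in\G$ for $n=\ord(Q)\ge 1$, contradicting the hypothesis. The points $P_1,\dots,P_r$ have infinite order because they are almost free, so $(Q,P_1,\dots,P_r)$ is a tuple of infinite-order points of the simple abelian variety $A$. Since the defining condition for almost freeness is insensitive to the ordering of the points (all of them lie in the same $A(K)$), it suffices to show $(Q,P_1,\dots,P_r)$ is almost free, and by Lemma~\ref{lem:simple_indep}, taking $Q$ as the distinguished point, this reduces to proving: if $n\in\Z$ and $\phi_1,\dots,\phi_r\in\E(A)$ satisfy $nQ+\Sigma_i\phi_i(P_i)=0$, then $nQ=0$ and $\phi_i(P_i)=0$ for all $i$.

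To settle this relation I would distinguish two cases. If $n\ne 0$, then $|n|\ge 1$ and $|n|\,Q=\pm\,\Sigma_i\phi_i(P_i)$ lies in $\G$, the $\E(A)$-submodule generated by $P_1,\dots,P_r$, contradicting $nQ\notin\G$; hence $n=0$. Then $nQ=0$ automatically, and $\Sigma_i\phi_i(P_i)=0$, so almost freeness of $P_1,\dots,P_r$ forces $\phi_i(P_i)=0$ for every $i$. This completes the verification.

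I do not anticipate a real obstacle; the only delicate point is the appeal to Lemma~\ref{lem:simple_indep}, which is precisely what reduces the problem to relations with an \emph{integral} coefficient on $Q$, and for those the hypothesis on $Q$ applies directly. If one wished to bypass the reordering, one could instead verify the defining implication (\ref{eqn:si}) for $(P_1,\dots,P_r,Q)$ head-on: given $\Sigma_i\phi_i(P_i)+\phi_0(Q)=0$ with $\phi_0\in\E(A)$, either $\phi_0=0$, and almost freeness of the $P_i$ finishes, or $\phi_0$ is an isogeny since $A$ is simple, and choosing $\psi\in\E(A)$ with $\psi\phi_0=[m]_A$ and $m\ge 1$ and applying $\psi$ yields $mQ+\Sigma_i\psi\phi_i(P_i)=0$ with $mQ\in\G$ and $m\ge 1$, which is impossible; so $\phi_0=0$ in any case. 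This is the same isogeny manoeuvre already used in the proof of Lemma~\ref{lem:simple_indep}.
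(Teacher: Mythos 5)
Your proposal is correct and is essentially the paper's own proof: the paper simply asserts that $P_1,\ldots,P_r,Q$ satisfy condition 2 of Lemma~\ref{lem:simple_indep} (with $Q$ as the distinguished point), which is exactly the verification you carry out, including the correct observations that $Q$ has infinite order and that a relation with a nonzero integer coefficient on $Q$ would force $|n|Q\in\G$. Your alternative direct verification via the isogeny manoeuvre is also sound, but unnecessary given the lemma.
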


\begin{proof}
The points $P_1,\ldots,P_r,Q$ satisfy 2 of lemma~\ref{lem:simple_indep} and thus they are almost free.
\end{proof}

\begin{cor}\label{cor:indep_Q}
Suppose $A$ is simple and $\G\subseteq A(K)$ is a submodule.  If $Q\in A(K)$ satisfies $nQ\not\in\G$, for every $n\geq 1$, and if $\phi\in\E(A)$ satisfies $\phi(Q)\in\G$, then $\phi(Q)$ is torsion.
\end{cor}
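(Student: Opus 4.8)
The plan is a short contradiction argument resting on the complementary-isogeny trick already used in the proof of Lemma~\ref{lem:simple_indep}. First I would record that $Q$ has infinite order: since the submodule $\G$ contains $0$, an equation $nQ=0$ with $n\geq 1$ would give $nQ\in\G$, contrary to hypothesis. The main case is then when $\phi\neq 0$ (if $\phi=0$ there is nothing to prove, as $\phi(Q)=0$).

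So assume $\phi\neq 0$. Since $A$ is simple, $\ker(\phi)$ is a proper algebraic subgroup, hence finite, so $\phi$ is an isogeny; by Lemma~\ref{lem:isogenies} there exist $\psi\in\E(A)$ and $m\geq 1$ with $\psi\phi=[m]_A$. Because $\G$ is an $\E(A)$-submodule and $\phi(Q)\in\G$, we get $mQ=\psi\bigl(\phi(Q)\bigr)\in\G$, contradicting the hypothesis that $nQ\notin\G$ for all $n\geq 1$. Hence in fact $\phi=0$, so $\phi(Q)=0$ is (trivially) torsion, which is more than enough.

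There is no real obstacle here; the only point worth noting is that the hypothesis ``$nQ\notin\G$ for every $n\geq 1$'' is exactly what makes $mQ\in\G$ contradictory, and it is inherited by any submodule $\G_0\subseteq\G$. This last remark lets one instead run the argument through Corollary~\ref{cor:free_basis}, which is closer to the surrounding discussion and yields precisely the stated ``torsion'' conclusion: pick a finite-index free submodule $\G_0\subseteq\G$ with almost-free basis $P_1,\dots,P_r$; since $nQ\notin\G_0$ for all $n$, Corollary~\ref{cor:free_basis} gives that $P_1,\dots,P_r,Q$ are almost free; choose $N\geq 1$ with $N\G\subseteq\G_0$ and write $N\phi(Q)=\Sigma_i\,\phi_i(P_i)$ with $\phi_i\in\E(A)$; then applying the defining implication~(\ref{eqn:si}) to the relation $\Sigma_i\,\phi_i(P_i)+(-N\phi)(Q)=0$ forces $N\phi(Q)=0$, i.e.\ $\phi(Q)$ is torsion.
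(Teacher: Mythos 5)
Your proposal is correct, and your primary argument takes a genuinely different (and sharper) route than the paper's. The paper's proof is essentially your fallback argument: it chooses a maximal almost-free subset $\{P_1,\dots,P_r\}\subset\G$ generating a finite-index submodule $\G'$, notes via Corollary~\ref{cor:free_basis} that $P_1,\dots,P_r,Q$ are almost free, writes $\phi(nQ)+\Sigma_i\phi_i(P_i)=0$ with $n=[\G:\G']$, and concludes $\phi(nQ)=0$ from the almost-freeness implication~(\ref{eqn:si}); so your second paragraph reproduces the intended proof, modulo the (correct) preliminary observation that $Q$ has infinite order. Your first argument bypasses that machinery entirely: for $\phi\neq 0$ and $A$ simple, $\phi$ is an isogeny, so some $\psi\in\E(A)$ satisfies $\psi\phi=[m]_A$ with $m\geq 1$, and then $mQ=\psi(\phi(Q))\in\G$ because $\G$ is an $\E(A)$-submodule --- contradicting the hypothesis. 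This is the same ``left-inverse up to multiplication'' trick the paper itself uses in Lemma~\ref{lem:simple_indep}, it avoids invoking Mordell--Weil finite generation (needed implicitly to produce a maximal almost-free subset), and it yields the strictly stronger conclusion $\phi=0$ rather than merely ``$\phi(Q)$ is torsion.'' The paper's version has the advantage of staying inside the almost-free formalism that Lemma~\ref{lem:basis_extend} is about to exploit, and its weaker ``torsion'' conclusion is the form actually cited there; but as a proof of the stated corollary, your direct argument is complete and cleaner.
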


\begin{proof}
Let $\{P_1,\ldots,P_r\}\subset\G$ be a maximal subset of almost free points, and let $\G'\subseteq\G$ be the finite-index submodule they generate (cf. cor.~\ref{cor:free_basis}).  If $\phi,\phi_1,\ldots,\phi_r\in\E(A)$ satisfy $\phi(Q)\in\G$ and $\phi(nQ)+\Sigma_i\phi_i(P_i)=0$, where $n=[\G:\G']$, then corollary~\ref{cor:free_basis} implies $\{\phi(nQ),\phi_i(P_i)\}_i=\zs$.  That is, $\phi(nQ)=0$ and thus $\phi(Q)$ is torsion of order dividing $n$.
\end{proof}

For general $A$, we can repeatedly apply the following lemma to choose a `quasi-basis' of a submodule, that is, a maximal subset of almost free points which generate a finite-index submodule:

\begin{lem}\label{lem:basis_extend}
Let $P_1,\ldots,P_r\in A(K)$ be almost free, and suppose the submodule $\G\subseteq A(K)$ they generate is torsion free.  If $\G'\subseteq A(K)$ is a submodule such that $\G\cap\G'$ has infinite index in $\G'$, then there exists $Q\in\G'$ such that $P_1,\ldots,P_r,Q$ are almost free and such that the submodule they generate is torsion free.
\end{lem}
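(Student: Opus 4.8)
The plan is to produce the point $Q$ by a descent-type argument inside $\G'$, using the structure of almost free points from the simple case (Lemma~\ref{lem:simple_indep}) repackaged via the connected components $B_i$. Since $\G\cap\G'$ has infinite index in $\G'$, I can first choose some $Q_0\in\G'$ whose image in $\G'/(\G\cap\G')$ has infinite order; equivalently $nQ_0\notin\G$ for every $n\geq 1$. In particular $Q_0$ has infinite order. The candidate for $Q$ will be $Q_0$ itself, or a suitable multiple, but the subtlety is that $Q_0$ may fail the almost-free implication (\ref{eqn:si}) for the extended family $P_1,\dots,P_r,Q_0$ — there could be a nontrivial relation $\psi_0(Q_0)+\Sigma_i\psi_i(P_i)=0$ with not all terms zero. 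I would argue that any such relation can be "absorbed'': one shows $\psi_0(Q_0)$ lies in $\G$ up to torsion, and then modifies $Q_0$ to kill the obstruction.

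The key steps, in order: (1) Write $B_i\subseteq A_i\subseteq A$ for the (connected) Zariski closure of $P_i$ and $B=\Pi_i B_i$, so that $P=\Pi_i P_i$ is independent in $B$ by Lemma~\ref{lem:indep}. (2) Let $B_0\subseteq A$ be the connected Zariski closure of a suitable multiple of $Q_0$; by Poincaré reducibility decompose $B_0$ up to isogeny into simple factors, and for each simple factor $C$ either $C$ is isogenous to a factor of some $B_i$ or it is "new.'' (3) If $C$ is new, the projection of $Q_0$ to $C$ contributes an honestly free direction and causes no trouble. If $C$ is isogenous to a factor of some $B_i$, then using Lemma~\ref{lem:simple_indep} in that factor — namely the equivalence of almost-freeness with the condition that a relation with a scalar leading coefficient forces all terms to vanish — one sees that a potential relation for $\{P_i,Q_0\}$ forces $\psi_0(Q_0)$ into the torsion of that factor after composing with a quasi-inverse isogeny. (4) Hence, up to replacing $Q_0$ by a bounded multiple $Q=NQ_0$ (clearing all the denominators coming from the quasi-inverses and from the torsion that appears, finitely many such constraints), the only relations that survive are those with $\psi_0(Q)=0$ and $\Sigma_i\psi_i(P_i)=0$; the latter forces each $\psi_i(P_i)=0$ since the $P_i$ are almost free. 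This yields (\ref{eqn:si}) for $P_1,\dots,P_r,Q$. (5) Finally, note $Q$ is chosen so that the submodule generated by $P_1,\dots,P_r,Q$ is torsion free: $\G$ is torsion free by hypothesis, and any torsion in $\End_K(A)\cdot Q + \G$ would, by the just-proved almost-freeness, have to come from a torsion multiple $\psi_0(Q)$, which we rule out by replacing $Q$ by a further multiple if necessary (or by working modulo $A(K)_{\tors}$ throughout and lifting back).

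The main obstacle is step (3)–(4): controlling the interaction between the new point $Q_0$ and the existing $P_i$ through the non-simple endomorphism algebra of $A$. The clean tool of Lemma~\ref{lem:simple_indep} is only stated for simple $A$, so I expect the real work is a careful reduction to simple factors — decomposing $B_0$ and the $B_i$ up to isogeny, tracking the finitely many isogeny degrees and torsion orders that arise, and then choosing $N$ (hence $Q=NQ_0$) to be divisible by all of them. Once that bookkeeping is done, the almost-freeness of $\{P_i,Q\}$ and the torsion-freeness of the submodule they generate follow formally, and $Q\in\G'$ holds since $\G'$ is a submodule containing $Q_0$.
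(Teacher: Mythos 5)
There is a genuine gap in your steps (3)--(4): replacing $Q_0$ by a multiple $NQ_0$ is not enough, and the claim that every relation forces $\psi_0(Q_0)$ into the torsion is false. Concretely, take $A=E\times E$ with $E$ an elliptic curve such that $\E(E)=\Z$ and $E(K)$ contains independent points $R,S$ of infinite order. Let $P_1=(R,0)$, which is almost free and generates the torsion-free submodule $\G=\Z R\times\Z R$, and let $\G'$ be the submodule generated by $Q_0=(S,R)$, so that $\G'=(\Z R+\Z S)^2$ and $\G\cap\G'=\G$ has infinite index in $\G'$. The point $Q_0$ satisfies your selection criterion ($nQ_0\notin\G$ for all $n\geq 1$), yet with $\psi_0=\left(\begin{smallmatrix}0&0\\0&1\end{smallmatrix}\right)$ and $\psi_1=\left(\begin{smallmatrix}0&0\\-1&0\end{smallmatrix}\right)$ one has $\psi_0(Q_0)+\psi_1(P_1)=0$ while $\psi_0(Q_0)=(0,R)$ has infinite order; this relation survives every replacement $Q_0\mapsto NQ_0$, so $P_1,NQ_0$ are never almost free. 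The defect is that your dichotomy is keyed to the isogeny type of the simple factors of the Zariski closure of $Q_0$, whereas what matters is, factor by factor, whether the projection of $Q_0$ stays of infinite order modulo the projection of $\G$; when it does not (here, the second coordinate), that component of $Q_0$ must be discarded, not merely multiplied.

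The paper's proof supplies exactly the missing move: it chooses a simple $C$ and an index $i$ (inside a minimal complemented piece $B$ of $A$) such that $\pi_i(\G\cap\G')$ still has infinite index in $\pi_i(\G')$ --- such an $i$ exists precisely because no multiple of $Q_0$ lies in $\G$ --- picks $Q'\in\G'$ with $n\pi_i(Q')\notin\pi_i(\G)$ for all $n$, and then sets $Q=\hat\pi_i(\pi_i(Q'))$, i.e.\ it replaces the raw point by its image under the quasi-projector onto that single simple factor (still an element of $\G'$, since $\G'$ is a submodule). In the example this produces $Q=(S,0)$, for which the conclusion does hold. Only after this projection does the torsion bound of corollary~\ref{cor:indep_Q} --- your ``bounded multiple'' step, which is otherwise sound because the torsion order there is uniformly bounded by a fixed index --- apply to all relations at once. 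So the outline needs amending: first choose the right simple factor, then project, and only then multiply.
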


\begin{proof}
Let $B,B'\subseteq A$ be abelian varieties such that $\H(B,B')=\H(B',B)=\zs$ and $B+B'=A$.  Let $\hat\pi:B\to A$ and $\hat\pi':B'\to A$ be the natural inclusions, and suppose $\pi\in\H(A,B)$, $\pi'\in\H(A,B')$, and $m,m'\geq 1$ satisfy $\pi\hat\pi=[m]_B$ and $\pi'\hat\pi'=[m']_{B'}$.

Suppose $B$ is minimal with the property that $\pi(\G\cap\G')$ has infinite index in $\pi(\G')$.  Thus there is a simple abelian subvariety $C\subseteq B$ and an isogeny $\psi:B\to C^e$ for some $e\geq 1$, and we define $\pi_i:A\to C$ to be the composition of $\psi\pi$ with projection onto the $i$th factor and let $\hat\pi_i:C\to A$ and $m_i\geq 1$ be such that $\pi_i\hat\pi_i=[m_i]_C$.  We note that $\ker(\pi)$ has finite index in $\cap_i\ker(\hat\pi_i\pi_i)$ and that the intersection of either with $\ker(\pi')$ is finite.


Let $i$ be such that $\pi_i(\G\cap\G')$ has infinite index in $\pi_i(\G')$.
Therefore there exists $Q'\in\G'$ such that $n\pi_i(Q')\not\in\pi_i(\G)$, for every $n\geq 1$, and we let $Q=\hat\pi_i(\pi_i(Q'))$.  Up to replacing $Q'$ by $nQ'$ for some $n\geq 1$, we suppose without loss of generality that the submodule generated by $P_1,\ldots,P_s,Q$ is torsion free.

From the equality $m_i\pi_j(\G)=\pi_i\hat{\pi}_i\pi_j (\Gamma)$ it follows that $[\pi_j(\G):\pi_i(\G)\cap\pi_j(\G)]$ is finite, for every $j$, and so we have $n\pi_j(Q)\not\in\pi_j(\G)$ for every $n\geq 1$.
Moreover, for $\phi\in\E(A)$ and $\phi'_{ij}=\pi_j\phi\hat\pi_i\in\E(C)$, we have the identity $\pi_j(\phi(Q))=\phi'_{ij}\pi_i(Q')$ and thus corollary~\ref{cor:indep_Q} implies $\pi_j\phi(Q)\in\pi_j(\G)$ only if $\pi_j\phi(Q)$ has finite order.  Up to replacing $Q$ by $nQ$, for some $n\geq 1$, we may suppose that $\pi_j\phi(Q)\in\pi_j(\G)$ only if $\pi_j\phi(Q)=0$.
 
Let $\phi_1,\ldots,\phi_r,\phi\in\E(A)$, and suppose $\Sigma_j\phi_j(P_j)+\phi(Q)=0$.  Our assumptions on $B,B'$ imply $\H(C,B')=\zs$, thus $\pi'\phi\hat\pi_i=0$ and $\pi'\phi(Q)=\pi'\phi\hat\pi_i(\pi_i(Q'))=0$.  That is, $\Sigma_j\pi'\phi_j(P_j)=0$ and hence $\Sigma_j\hat\pi'\pi'\phi_j(P_j)=0$.  Since $P_1,\ldots,P_r$ are almost free, $\{\hat\pi'\pi'\phi_j(P_j)\}_j=\zs$, and thus since $\hat\pi':B'\to A$ is injective, we have $\{\pi'\phi_j(P_j)\}_j=\zs$.  Therefore $\{\phi_j(P_j),\phi(Q)\}_j$ lies in the kernel of $\pi'$.  We will show that it also lies in $\cap_k\ker(\hat\pi_k\pi_k)$.

By assumption, $\Sigma_j\pi_k\phi_j(P_j)+\pi_k\phi(Q)=0$ for every $k$, thus $\pi_k\phi(Q)\in\pi_k(\G)$ and so $\pi_k\phi(Q)=0$.  That is, $\Sigma_j\pi_k\phi_j(P_j)=0$ for every $k$, and thus $\Sigma_j\hat\pi_k\pi_k\phi_j(P_j)=0$ and so $\{\hat\pi_k\pi_k\phi_j(P_j)\}_j=\zs$ since $P_1,\ldots,P_r$ are almost free.  Therefore $\{\phi_j(P_j),\phi(Q)\}_j$ lies in the kernel of $\hat\pi_k\pi_k$, for every $k$.  That is, $\{\phi_j(P_j),\phi(Q)\}_j$ lies in both $\ker(\pi')$ and $\cap_k\ker(\hat\pi_k\pi_k)$ and thus in a finite subgroup of $A(K)$.  Since $\G$ is torsion free and $\phi_j(P_j)\in\G$ for each $j$, we must have $\{\phi_j(P_j)\}_j=\zs$ and thus $\phi(Q)=0$ as well.  That is, $P_1,\ldots,P_r,Q$ are almost free.
\end{proof}


\section{Proof of Theorem~\ref{thm1}}\label{sec:proof_thm1}

We have already seen that if 1 holds, then 2 holds, and if moreover $A$ is square free, then 3 and 4 hold (see section~\ref{sec:proof:isogeny}).  Thus we suppose that 1 fails and show that 2, 3, 4 fail accordingly.

Suppose $P\in A(K)$ and $Q_1,\ldots,Q_r\in A'(K)$ are points, and for each prime $\ell$ and $m\geq 0$, consider the following set:
\[
	S_{\ell,m}(P,Q_1,\ldots,Q_r) :=
		\{\ 
			\p\in S(A)\cap S(A') : \e_{\ell,P}(\p)=m,\ \e_{\ell,Q_i}(\p)=0\mbox{ for  }i=1,\ldots,r
		\ \}.
\]
The basic strategy underlying our proof is to first make judicious choices of $P,Q_1,\ldots,Q_r$ so that we can analyze the $\ell$-parts of $\G_\p,\G'_\p$ for $\ell\gg 0$ and varying $\p$.  In particular, we will show these sets usually have positive density and deduce that 2 fails for $\ell\gg 0$, and moreover, that 3 and 4 fail when $A$ is square free and $\ell\gg 0$.


Let $B'$ be an abelian variety such that $A$ and $A'$ each have some abelian subvariety isogenous to $B'$, and we suppose that $B'$ has maximal dimension.  Then for some abelian subvarieties $B\subseteq A$, $B''\subseteq A'$ we have that $A, A'$ are respectively isogenous to $B\times B'$, $B'\times B''$ so, by corollary~\ref{cor:isogeny}, we may assume that $A=B\times B'$, $A'=B'\times B''$.
We may also assume $\phi\in H(A, A')$ is the composition of the projection on $B'$ and the inclusion $B'\subseteq A'$.

\begin{lem}\label{lem:min_dim}
We have $\H(B,B'')=\H(B'',B)=\zs$.
\end{lem}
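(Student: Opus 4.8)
The plan is to argue by contradiction, exploiting the maximality of $\dim B'$. Suppose, say, $\H(B,B'')\neq\zs$; by Corollary~\ref{cor:sym_zs} this is equivalent to $\H(B'',B)\neq\zs$, so it suffices to derive a contradiction from a nonzero homomorphism between $B$ and $B''$. The point to extract is a common abelian subvariety of $B$ and $B''$ of positive dimension, which will in turn produce a common abelian subvariety of $A$ and $A'$ strictly larger than $B'$, contradicting maximality.

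Concretely, I would take $\phi\neq 0\in\H(B,B'')$ and apply Lemma~\ref{lem:isogenies} to it: there exist an abelian subvariety $B_0\subseteq B$, a homomorphism $\psi\in\H(B'',B_0)$, the image $B_1=\phi(B)\subseteq B''$, and an integer $m\geq 1$ such that $B_1+\ker(\psi)=B''$ and the restrictions $\phi|_{B_0}\colon B_0\to B_1$ and $\psi|_{B_1}\colon B_1\to B_0$ are isogenies with $\phi\psi|_{B_1}=[m]_{B_1}$. In particular $B_0\subseteq B$ and $B_1\subseteq B''$ are isogenous abelian varieties of the same positive dimension $d_0=\dim B_0>0$ (positive since $\phi\neq 0$).

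Now recall $A=B\times B'$ and $A'=B'\times B''$. The product $B_0\times B'\subseteq B\times B' = A$ is an abelian subvariety of $A$, and $B'\times B_1\subseteq B'\times B'' = A'$ is an abelian subvariety of $A'$; moreover $B_0\times B'$ is isogenous to $B_1\times B'$ (using the isogeny $B_0\to B_1$ above on the first factor and the identity on $B'$). Thus $A$ and $A'$ both contain an abelian subvariety isogenous to $B_1\times B'$, whose dimension is $d_0+\dim B'>\dim B'$. This contradicts the choice of $B'$ as a common (up to isogeny) abelian subvariety of maximal dimension. Hence $\H(B,B'')=\zs$, and by Corollary~\ref{cor:sym_zs} also $\H(B'',B)=\zs$.

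The only point that needs a little care — and the place I would expect a careful referee to look — is the claim that $B_0\times B'$ (resp.\ $B'\times B_1$) genuinely is an abelian subvariety of $A$ (resp.\ $A'$) \emph{after} the reduction via Corollary~\ref{cor:isogeny}: there we only arranged $A\cong B\times B'$ and $A'\cong B'\times B''$ up to the identifications already made, so $B$, $B'$, $B''$ are honest subvarieties of $A$, $A'$ and the products of subvarieties under the product decompositions are again subvarieties. This is routine, as is the observation that "contains an abelian subvariety isogenous to $X$" depends only on the isogeny class of $X$, so comparing dimensions of isogenous subvarieties is legitimate.
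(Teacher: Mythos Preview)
Your proof is correct. Both you and the paper argue by contradiction against the maximality of $\dim B'$, but the constructions differ. You directly exhibit the larger common-up-to-isogeny subvariety as a product: from a nonzero map $B\to B''$ you extract isogenous $B_0\subseteq B$ and $B_1\subseteq B''$ of positive dimension via Lemma~\ref{lem:isogenies}, and then $B_0\times B'\subseteq A$ and $B'\times B_1\subseteq A'$ do the job. The paper instead builds a single homomorphism $\phi'=\phi+\psi\pi\in\H(A,A')$ (with $\psi\neq 0\in\H(B,B'')$ and $\pi:A\to B$ the projection), notes that its image strictly contains $\phi(A)=B'$, and then invokes Lemma~\ref{lem:isogenies} once to realize this larger image as isogenous to a subvariety of $A$. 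Your route is more elementary and transparent; the paper's is slightly more compact, since one map simultaneously produces the subvariety of $A'$ and, via Lemma~\ref{lem:isogenies}, the matching subvariety of $A$. One cosmetic point: you reuse the letter $\phi$ for the nonzero map $B\to B''$, which clashes with the paper's fixed $\phi\in\H(A,A')$; renaming it would avoid confusion.
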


\begin{proof}
Suppose $\psi\neq 0\in\H(B,B'')$, and let $\pi:A\to B$ be projection.  Up to composing with the natural embedding $B'' \rightarrow A'$, we can consider the homomorphism $\phi'=\phi+\psi\pi\in\H(A,A')$, and let $C\subseteq A$ be its kernel.  The kernel of $\phi$ is $B=B\times\zs\subseteq A$, and $B'=\zs\times B'\subseteq A$ satisfies $B\cap B'=\zs$, thus the restriction of $\phi'$ to $B'\subseteq A$, which is simply $\phi$, is injective.  The image of $\phi'$ contains the images of $\phi$ and $\psi$, thus it is strictly larger, so $\dim(\ker(\phi'))=\dim(C)<\dim(\ker(\phi))$.  That is, if $\H(B,B'')$ is non trivial, then $\dim(B')$ is not maximal by lemma~\ref{lem:isogenies}.  Finally, $\H(B,B'')=\zs$ if and only if $\H(B'',B)=\zs$ by corollary~\ref{cor:sym_zs}.
\end{proof}

\begin{cor}\label{cor:min_dim}
Suppose $P\in B(K)$, and let $\G_0\subseteq A(K)$ be the submodule it generates.  Then $\phi(\G_0)=\{\psi(P):\psi\in\H(B,A')\}$.
\end{cor}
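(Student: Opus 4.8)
The plan is to make the product decompositions $A=B\times B'$ and $A'=B'\times B''$ explicit and reduce the claim to elementary block-matrix bookkeeping, the only nontrivial ingredient being Lemma~\ref{lem:min_dim}.

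First I would view $P$ as the point $(P,0)\in A(K)=B(K)\times B'(K)$ and note that, since $\E(A)$ is a ring with identity, the submodule it generates is $\G_0=\{\chi(P,0):\chi\in\E(A)\}$, so $\phi(\G_0)=\{\phi(\chi(P,0)):\chi\in\E(A)\}$. Writing a general $\chi\in\E(A)$ in block form relative to $A=B\times B'$, namely $\chi(b,b')=(\chi_{11}(b)+\chi_{12}(b'),\,\chi_{21}(b)+\chi_{22}(b'))$ with $\chi_{21}\in\H(B,B')$, we get $\chi(P,0)=(\chi_{11}(P),\chi_{21}(P))$; and since $\phi$ is the projection of $A$ onto its $B'$-factor followed by the inclusion of $B'$ into $A'$ as its first factor, $\phi(\chi(P,0))=(\chi_{21}(P),0)$. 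Thus $\phi(\G_0)$ is exactly the set of points $(\chi_{21}(P),0)\in A'(K)$ as $\chi_{21}$ ranges over $\H(B,B')$. Since the map $b\mapsto(\chi_{21}(b),0)$ is precisely the composition of $\chi_{21}$ with the inclusion $B'\hookrightarrow A'$, hence an element of $\H(B,A')$, this already gives $\phi(\G_0)\subseteq\{\psi(P):\psi\in\H(B,A')\}$.

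For the reverse inclusion I would observe that every $\psi\in\H(B,A')=\H(B,B'\times B'')$ splits as $\psi=(\psi',\psi'')$ with $\psi'\in\H(B,B')$ and $\psi''\in\H(B,B'')$; by Lemma~\ref{lem:min_dim} we have $\H(B,B'')=\zs$, so $\psi''=0$ and $\psi(P)=(\psi'(P),0)$, which is visibly of the form above (take $\chi_{21}=\psi'$ and the remaining blocks zero). Hence the two sets coincide. I do not expect a genuine obstacle; the only point needing care is to keep the two block decompositions — of $\E(A)$ relative to $A=B\times B'$ and of $\H(B,A')$ relative to $A'=B'\times B''$ — compatible with the definition of $\phi$, and to notice that the vanishing actually used is $\H(B,B'')=\zs$, supplied by Lemma~\ref{lem:min_dim}, and \emph{not} $\H(B,B')=\zs$, which is false whenever $B$ and $B'$ share an isogeny factor.
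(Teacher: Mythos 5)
Your proof is correct and follows essentially the same route as the paper's: the forward inclusion by writing an endomorphism of $A=B\times B'$ in block form and noting that $\phi$ kills the $B$-component, and the reverse inclusion by using $\H(B,B'')=\zs$ from Lemma~\ref{lem:min_dim} to see that any $\psi\in\H(B,A')$ factors through $B'$ and hence arises from an endomorphism of $A$. Your closing remark correctly identifies that the vanishing actually needed is $\H(B,B'')=\zs$, which is exactly what the paper uses.
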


\begin{proof}
Let $\a\in\E(A)$ and $\a(P)\in\G_0$.  Lemma~\ref{lem:min_dim} implies $\H(B,B'')=\zs$ and thus $\phi\a(P)=\psi(P)$ for some $\psi\in\H(B,B')$.  Conversely, if $\psi\in\H(B,A')$ and if $\pi'':A'\to B''$ is projection, then $\pi''\psi=0\in \H(B,B'')$ and thus $\psi$ factors through the natural embedding $B'\to A'$.  In particular, if we compose with the natural embedding $B'\to A$, then the endomorphism $\a\in\E(A)$ such that $\a|_B=\psi$ and $\a|_{B'}=0$ satisfies $\phi(\a(P))=\psi(P)$ and thus $\psi(P)\in\phi(\G_0)$.
\end{proof}

The following lemma allows us to reduce to the case $B=0$:

\begin{lem}\label{lem:B_pos}
Suppose that condition $1$ in theorem~\ref{thm1} fails. If $B'\neq A$ then 2 fails.  Moreover, if $A$ is square free, then 3 and 4 also fail.
\end{lem}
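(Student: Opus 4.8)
The setting is $A=B\times B'$, $A'=B'\times B''$, $\phi\in\H(A,A')$ the composite of the projection onto $B'$ with the inclusion $B'\hookrightarrow A'$, $B\neq\zs$ (this is what $B'\neq A$ means), and, by Lemma~\ref{lem:min_dim}, $\H(B,B'')=\H(B'',B)=\zs$. The plan is to produce, for $\ell\gg 0$, a prime $\p\in S'$ at which $\G_\p$ is strictly larger than $\G'_\p$ (and, when $A$ is square free, similarly for the exponent and the radical); the strict inequality will come from the ``$B$-part'' $\G_B:=\G\cap B(K)$ of $\G$, which $\G'$ cannot account for. Two preliminary facts: first, $\G_B$ is a \emph{dense} submodule of $B(K)$, in particular infinite, since given $0\neq\beta\in\E(B)$ the endomorphism of $A$ obtained by conjugating $\beta$ with the projection $A\to B$ and the inclusion $B\hookrightarrow A$ is nonzero, hence non-vanishing on $\G$ by density, and projecting the resulting point back to $B$ yields an element of $\G_B$ not killed by $\beta$. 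Second, for every $\p\in S(A)\cap S(A')$ one has the pointwise bound
\[
	\nu_{\ell,\G}(\p)\ \ge\ \nu_{\ell,\phi(\G)}(\p)+\nu_{\ell,\G_B}(\p),\qquad
	\ve_{\ell,\G}(\p)\ \ge\ \max\bigl\{\ve_{\ell,\phi(\G)}(\p),\ \ve_{\ell,\G_B}(\p)\bigr\},
\]
and the analogue for $\rho_\ell$: reduction modulo $\p$ of $0\to\G_B\to\G\xrightarrow{\phi}\phi(\G)\to 0$ is a surjection $\G_\p\twoheadrightarrow\phi(\G)_\p$ whose kernel is $\G_\p\cap B(k_\p)\supseteq(\G_B)_\p$, so $\#\G_\p\ge\#\phi(\G)_\p\cdot\#(\G_B)_\p$, while $\phi(\G)_\p$ is a quotient of $\G_\p$ and $(\G_B)_\p$ a subgroup.

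Next I would reduce the comparison to $B'$. Since $\phi(\G)=\pi_{B'}(\G)$ is a dense submodule of $B'(K)$ with $\dim B'<\dim A$, one compares $(A,A',\G,\G')$ with the smaller datum $(B',A',\phi(\G),\G')$ (formally by induction on $\dim A+\dim A'$, the case $B=\zs$ of the theorem being treated separately). If condition~1 of Theorem~\ref{thm1} fails for $(B',A',\phi(\G),\G')$, then condition~2 (and, since $A$ hence $B'$ is square free, conditions 3 and 4) fails for it, so there is a prime $\p\in S'$ with $\nu_{\ell,\phi(\G)}(\p)>\nu_{\ell,\G'}(\p)$ (resp.\ for $\ve_\ell$, $\rho_\ell$), and by the pointwise bound this same $\p$ witnesses the failure of 2 (resp.\ 3, 4) for $(A,A',\G,\G')$; here the square-free hypothesis is only needed for 3 and 4.

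If instead condition~1 holds for $(B',A',\phi(\G),\G')$, there is an isogeny $\psi\colon B'\to A'$ onto its image with $[\psi\phi(\G):\psi\phi(\G)\cap\G']$ finite; using Corollary~\ref{cor:isogeny} I may assume $A'=\psi(B')\times B''$, the crucial point being that the second factor is $B''$, with $\H(B,B'')=\H(B'',B)=\zs$. I would then pick $W\in\G_B$ of infinite order and a quasi-basis $D_1,\dots,D_c$ of the image of $\G'$ in the $B''$-factor; since no nonzero homomorphism links $B$ with $B''$, the points $W,D_1,\dots,D_c$ are almost free, so Proposition~\ref{prop:indep} yields a positive-density set of primes $\p$ with $\ve_{\ell,W}(\p)=1$ and $\ve_{\ell,D_i}(\p)=0$ for all $i$. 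For $\ell\gg 0$ this set meets $S'$, and on it the $B''$-part of $\G'$ contributes nothing $\ell$-adically while, by condition~1, the $\psi(B')$-part contributes no more than $\phi(\G)$; hence $\nu_{\ell,\G'}(\p)=\nu_{\ell,\phi(\G)}(\p)$, whereas the pointwise bound gives $\nu_{\ell,\G}(\p)\ge\nu_{\ell,\phi(\G)}(\p)+1$. Thus $\nu_{\ell,\G}(\p)>\nu_{\ell,\G'}(\p)$ and 2 fails; repeating the argument with $\ve_\ell$ and $\rho_\ell$ in place of $\nu_\ell$ — where square-freeness of $A$ forces $\ve_{\ell,\phi(\G)}(\p)=\ve_{\ell,\G'}(\p)$ on the same set — shows 3 and 4 fail as well.

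The main obstacle is this last case: extracting, from the single containment $[\psi\phi(\G):\psi\phi(\G)\cap\G']<\infty$, a clean enough description of $\G'$ — up to finite index, built from $\phi(\G)$ together with a summand that is Hom-orthogonal to $B$ — and verifying that on the positive-density set above one genuinely has $\nu_{\ell,\G'}(\p)=\nu_{\ell,\phi(\G)}(\p)$ (and the analogue for $\ve_\ell$, $\rho_\ell$). The delicacy is that $\G'$ need not split as a product along $A'=\psi(B')\times B''$, so the ``$B''$-part'' must be handled via projections and finite-index comparisons, and all the resulting finite-index discrepancies must be bounded independently of $\ell$ so that they disappear once $\ell\gg 0$.
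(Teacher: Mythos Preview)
Your case~(b) has a genuine gap, and it is precisely the ``main obstacle'' you flag. Condition~1 for $(B',A',\phi(\G),\G')$ gives an inclusion in the \emph{wrong direction}: it says (up to finite index) $\psi\phi(\G)\subseteq\G'$, not that the $\psi(B')$-component of $\G'$ is bounded by $\psi\phi(\G)$. Your points $D_1,\dots,D_c$ control only $\pi_{B''}(\G')$, so on the set where they are $\ell$-trivial you still have no handle on $\G'_\p\cap\psi(B')(k_\p)$. Concretely, take $B,B',B''$ simple, pairwise non-isogenous with $\E(B)=\E(B')=\E(B'')=\Z$, pick independent $P,P'\in B'(K)$, $W\in B(K)$, $V\in B''(K)$ of infinite order, and set $\G=\Z W\times\Z P$, $\G'=(\Z P\oplus\Z P')\times\Z V$. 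Case~(b) applies with $\psi$ the inclusion, your only $D$ is $V$, and on the positive-density set where $\ve_{\ell,W}(\p)=1$, $\ve_{\ell,V}(\p)=0$, $\ve_{\ell,P}(\p)=0$, $\ve_{\ell,P'}(\p)=1$ (which is a subset of your set, by Proposition~\ref{prop:indep}) one gets $\nu_{\ell,\G}(\p)=1=\nu_{\ell,\G'}(\p)$. So the asserted equality $\nu_{\ell,\G'}(\p)=\nu_{\ell,\phi(\G)}(\p)$ fails, and your argument does not produce a witness to $\neg 2$.

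The paper avoids the dichotomy and works directly. It fixes $P\in\pi_B(\G)$ of infinite order, lets $\G_0\subseteq\G$ be the $\E(A)$-submodule it generates, chooses a maximal almost-free set $\{R_1,\dots,R_s\}\subset\phi(\G_0)\cap\G'$, and extends it to a maximal almost-free set $\{Q_1,\dots,Q_r,R_1,\dots,R_s\}\subset\G'$. The substantive step --- two short but nontrivial paragraphs using $\H(B,B'')=\zs$, Corollary~\ref{cor:min_dim}, and the finiteness of $\phi(\G_0)\cap\langle Q_i\rangle$ --- is to prove that $P,Q_1,\dots,Q_r$ are almost free. Once that is done, on $S_{\ell,m}(P,Q_1,\dots,Q_r)$ the entire $\ell$-part of $\G'_\p$ is carried by the $R_i$'s and hence lies inside $\phi(\G)_\p$, which gives $\nu_{\ell,\G'}(\p)\le\nu_{\ell,\G}(\p)-m$. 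In the square-free case $\H(B,B')=\zs$ too, so $\phi(\G_0)=\zs$ by Corollary~\ref{cor:min_dim}, there are no $R_i$'s, and $\ve_{\ell,\G'}(\p)=0$ on the same set. What your case~(b) is missing is exactly this: the complementary points must exhaust \emph{all} of $\G'$ modulo $\phi(\G_0)\cap\G'$, not merely its $B''$-projection, and the work is in proving that $P$ together with such points are almost free.
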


\begin{proof}
Suppose that $B\neq 0$. If $\pi:A\to B$ is projection, then $\pi(\G)\subseteq B(K)$ is infinite since $\G$ is dense (see lemma~\ref{lem:infinite}), so let $P\in\pi(\G)$ have infinite order and let $\G_0\subseteq\G$ be the submodule it generates.  Lemma~\ref{lem:min_dim} implies $\H(B,B'')=\zs$ and thus $\psi(B)\subseteq B'$ for every $\psi\in\H(B,A')$.  Thus up to replacing $P$ by $nP$ with $n$ the size of the torsion subgroup of $B'(K)$, we suppose without loss of generality that, for every $\psi\in\H(B,A')$, either $\psi(P)=0$ or $\psi(P)$ has infinite order.

Let $\{R_1,\ldots,R_s\}\subset(\phi(\G_0)\cap\G')$ and $\{Q_1,\ldots,Q_r,R_1,\ldots,R_s\}\subseteq\G'$ be maximal subsets of almost free points, and let $\G'_0\subseteq\G'$ be the submodule generated by $Q_1,\ldots,Q_r$ and $\G'_1\subseteq\G'$ be the submodule generated by $R_1,\ldots,R_r$. 
Up to replacing each $Q_i$ by $nQ_i$ with $n$ the size of the torsion subgroup of $B(K)$, we suppose without loss of generality that, for every $i$ and $\psi_i\in\H(A',B)$, either $\psi_i(Q_i)=0$ or $\psi_i(Q_i)$ has infinite order.
Since a maximal subset of almost free points generates a finite index submodule by lemma~\ref{lem:basis_extend}, we may suppose that $\ell$ is coprime with the index of the above submodules.  If $\p\in S_{\ell,m}(P,Q_1,\ldots,Q_r)$, then the $\ell$-part of $\G'_\p$ lies in that of $(\phi(\G_0)\cap\G')_\p$ and thus 
$$\ord_\ell(\G'_\p)\leq\ord_\ell(\G_\p)-\ord_\ell(\G_\p\cap B(K)_\p)\leq\ord_\ell(\G_\p)-m\,.$$
If moreover $A$ is square free, then $\H(B,B')=\zs$ and so $\H(B,A')=\zs$, thus corollary~\ref{cor:min_dim} implies $\phi(\G_0)
=\zs$ and $\e_{\ell,\G'}(\p)=0$.

If $S_{\ell,m}(P,Q_1,\ldots,Q_r)$ has positive density for every $m\geq 0$, then $\nu_{\ell,\G}(\p)-\nu_{\ell,\G'}(\p)$ can be made arbitrarily large on a positive density set, thus 2 fails.  If moreover $A$ is square free, then for every $m\geq 0$, the identities $\e_{\ell,\G}(\p)\geq m$ and $\e_{\ell,\G'}(\p)=0$ hold on a positive density set, thus 3 and 4 fail.  To complete the proof it suffices to show that $P,Q_1,\ldots,Q_r$ are almost free because then proposition~\ref{prop:indep} implies that $S_{\ell,m}(P,Q_1,\ldots,Q_r)$ has positive density for every $\ell\gg 0$ and $m\geq 0$.

The intersection of $\G'_1$ and $\G'_0$ is trivial since $Q_1,\ldots,Q_r,R_1,\ldots,R_s$ are almost free, thus $\phi(\G_0)\cap\G'_0$ is finite.  Suppose $\psi\in\H(B,A')$ and $\psi_1,\ldots,\psi_r\in\E(A')$ satisfy $\psi(P)+\Sigma_i\psi_i(Q_i)=0$.  Then $\psi(P)\in\G'_0$ and corollary~\ref{cor:min_dim} implies $\psi(P)\in\phi(\G_0)$, thus $\psi(P)$ lies in the finite intersection $\phi(\G_0)\cap\G'_0$ and so $\psi(P)=0$ by our assumptions on $P$.  Therefore, since $Q_1,\ldots,Q_r$ are almost free, we have $\{\psi_i(Q_i)\}_i=\zs$ and so $\{\psi(P),\psi_i(Q_i)\}_i=\zs$.

Suppose $\psi\in\E(B)$ and $\psi_1,\ldots,\psi_r\in\H(A',B)$ satisfy $\psi(P)+\Sigma_i\psi_i(Q_i)=0$.  For each $i$, let $\psi'_i\in\H(B,A')$ be such that the kernel of $\psi_i$ has finite index in $\ker(\psi'_i\psi_i)$ (cf.~lemma~\ref{lem:isogenies}), and consider the identity $\psi'_i\psi(P)+\Sigma_j\psi'_i\psi_j(Q_j)=0$.  As in the previous paragraph, $\psi'_i\psi(P)=0$ and thus $\{\psi'_i\psi_j(Q_j)\}_j=\zs$ for every $i$.  That is, $\psi_j(Q_j)$ lies in the kernel of $\psi'_i$ for every $i,j$, and a fortiori for every $i=j$.  In particular, since $\ker(\psi_i)$ has finite index in $\ker(\psi'_i\psi_i)$, $\psi_i(Q_i)$ is torsion.  By our assumptions on $Q_1,\ldots,Q_r$, we have $\{\psi_i(Q_i)\}_i=\zs$ and thus $\{\psi(P),\psi_i(Q_i)\}_i=\zs$.  Therefore, $P,Q_1,\ldots,Q_r$ are almost free as claimed.
\end{proof}

We suppose for the remainder of the section that $B=0$ and thus $A=B'\subseteq A'$ and $\phi$ is the inclusion.  Let $\{Q_1,\ldots,Q_r\}\subset\G'$ be a maximal subset of almost free points, and let $\G'_0\subseteq\G'$ be the finite-index submodule they generate (which we may suppose to be torsion free) and $\ell$ be a prime not dividing $[\G':\G'_0]$.

Suppose condition 1 fails and thus $\G\cap\G'$ has infinite index in $\G$.  Therefore the index of $\phi_*(\G)\cap\G'$ in $\phi_*(\G)$  is infinite and lemma~\ref{lem:basis_extend} implies there exists $P\in\phi_*(\G)$ such that $P,Q_1,\ldots,Q_r$ are almost free.

If $\p\in S_{\ell,m}(P,Q_1,\ldots,Q_r)$, then $\ord_\ell(\G_\p)\geq m$ and $\ord_\ell(\G'_\p)=0$, thus we have the following inequalities:
\[
	\nu_{\ell,\G}(\p) \geq \e_{\ell,\G}(\p)= \e_{\ell,\phi_*(\G)}(\p)\geq m \geq \nu_{\ell,\G'}(\p)=\e_{\ell,\G'}(\p)=0.
\]
In particular, if $\ell\gg 0$, then proposition~\ref{prop:indep} implies $S_{\ell,m}(P,Q_1,\ldots,Q_r)$ has positive density for every $m\geq 0$, and thus 2, 3, and 4 fail.

\bigskip\bigskip\noindent
Q.E.D.


\vspace{0.6cm}

\noindent {\it Chris Hall,} University of Wyoming\\
\noindent \text{E-mail}: chall14@uwyo.edu\\

\noindent {\it Antonella Perucca,} Research Foundation - Flanders (FWO)

\noindent \text{E-mail}: antonellaperucca@gmail.com\\


\begin{thebibliography}{10} \expandafter\ifx\csname url\endcsname\relax   \def\url#1{\texttt{#1}}\fi \expandafter\ifx\csname urlprefix\endcsname\relax\def\urlprefix{URL }\fi

\bibitem{DemeyerPerucca}
J.~Demeyer and A.~Perucca, \emph{The constant of the support problem for abelian varieties}, arXiv:1008.3719.

\bibitem{Faltings83}
G.~Faltings, \emph{Finiteness Theorems for Abelian Varieties over Number Fields}, Arithmetic 
Geometry, Edited by G.~Cornell and J.~H.~Silverman, Springer-Verlag, New York, 1986, 9--27.

\bibitem{HindrySilverman}
M.~Hindry and J.~Silverman, \emph{Diophantine Geometry. An Introduction}, Graduate Texts in Mathematics 201, Springer-Verlag, New York, 2000.

\bibitem{Peruccaord1}
A.~Perucca, \emph{Prescribing valuations of the order of a point in the reductions
  of abelian varieties and tori}, J. Number Theory \textbf{129} (2009), no.~2,
  469--476.

\bibitem{Peruccaord2}  
A.~Perucca, \emph{On the reduction of points on abelian varieties and tori}, Int. Math. Res. Notices \textbf{2011} (2011), no.~7, 293--308.

\bibitem{Zarhin}
Y. Zarhin, \emph{Homomorphisms of abelian varieties over finite fields}, Higher-dimensional geometry over finite fields, Edited by D.~Kaledin and Y.~Tschinkel, IOS, Amsterdam, 2008, 315--343.   

\end{thebibliography}
\end{document}